\documentclass[11pt]{article}
\usepackage{amsmath,amssymb,amsthm}
\usepackage{authblk}
\usepackage{natbib}
\usepackage{comment}
\usepackage{makeidx}
\usepackage[all]{xy}           %For commutative diagrams
\usepackage{amsfonts}
\usepackage{mathtools}
\usepackage{commath}
\usepackage[sc,osf]{mathpazo}
\usepackage{eucal}
\usepackage{calc}
\usepackage{graphicx}
\usepackage[margin=1cm]{caption}
\usepackage{multicol}
\usepackage{tikz}
\usepackage{amsmath}
\usetikzlibrary{arrows.meta, decorations.pathreplacing, calc, shapes.misc}
\usepackage{pgfplots}
\pgfplotsset{compat=1.18}
\usepackage[hidelinks]{hyperref}

\newcommand{\e}{\mbox{exp}}

\newcommand{\ca}{\mbox{cay}}
\newcommand{\ad}{\mbox{ad}}

\newcommand{\al}{\mathfrak{g}}

\numberwithin{equation}{section}

\newtheorem{definition}{Definition}[section]

\newtheorem{theorem}[definition]{Theorem}
\newtheorem{proposition}[definition]{Proposition}

\newtheorem{remarkth}[definition]{Remark}
\newtheorem{example}[definition]{Example}
\newenvironment{remark}{\begin{remarkth}\upshape}{\hfill$\diamond$\end{remarkth}}
\renewcommand{\emph}[1]{{\bfseries\itshape{#1}}}
\usepackage{amssymb}
\newcommand{\R}{\mathbb{R}}      %Numeros reales
\newcount\ancho \newcount\anchom \newcount\anchoa
\newcount\anchob \newcount\altura

\newcommand{\ltilde}[3][0]{\altura=0 \advance\altura by #1
	\ancho=#2 \anchom=\ancho \divide\anchom by 2
	\anchoa=\ancho \divide\anchoa by 4
	\anchob=\anchom \advance\anchob by \anchoa
	\kern-3pt \begin{array}[b]{c}
		\begin{picture}(1,1)(\anchom,-\altura)
		\qbezier(0,2)(\anchoa,5)(\anchom,2)
		\qbezier(\anchom,2)(\anchob,-1)(\ancho,4)
		\qbezier(0,2)(\anchoa,4.5)(\anchom,1.8)
		\qbezier(\anchom,1.8)(\anchob,-1.5)(\ancho,4)
		\end{picture} \\[-4pt]{#3}
	\end{array} \kern-4pt    }

\newcommand{\lhat}[3][0]{\altura=0 \advance\altura by #1
	\ancho=#2 \anchom=\ancho \divide\anchom by 2
	\anchoa=\ancho \divide\anchoa by 4
	\anchob=\anchom \advance\anchob by \anchoa
	\kern-3pt \begin{array}[b]{c}
		\begin{picture}(1,1)(\anchom,-\altura)
		\qbezier(0,2)(\anchoa,4)(\anchom,6)
		\qbezier(\anchom,6)(\anchob,4)(\ancho,2)
		\qbezier(0,2)(\anchoa,3.8)(\anchom,5.6)
		\qbezier(\anchom,5.6)(\anchob,3.8)(\ancho,2)
		\end{picture} \\[-4pt] {#3}
	\end{array} \kern-4pt    }

\newcommand{\lvec}[1]{\overleftarrow{#1}}
\newcommand{\rvec}[1]{\overrightarrow{#1}}

\newcommand{\tr}{\operatorname{Tr}}
\makeatletter
\newcommand\prol{\@ifstar{\@proldf}{\@prolpf}}  %% if * dual else primal
\def\@prolpf{\@ifnextchar[{\@prolpf@wrt}{\@prolpf@}}
\def\@prolpf@wrt[#1]#2{\@ifnextchar[{\@prolpf@wrt@at{#1}{#2}}{\@prolpf@wrt@{#1}{#2}}}
\def\@prolpf@wrt@at#1#2[#3]{\prolsymbol^{#1}_{#3}#2}
\def\@prolpf@wrt@#1#2{\prolsymbol^{#1}#2}
\def\@prolpf@#1{\@ifnextchar[{\@prolpf@at{#1}}{\@prolpf@@{#1}}}
\def\@prolpf@at#1[#2]{\prolsymbol_{#2}#1}
\def\@prolpf@@#1{\prolsymbol#1}
\def\@proldf{\@ifnextchar[{\@proldf@wrt}{\@proldf@}}
\def\@proldf@wrt[#1]#2{\@ifnextchar[{\@proldf@wrt@at{#1}{#2}}{\@proldf@wrt@{#1}{#2}}}
\def\@proldf@wrt@at#1#2[#3]{\prolsymbol^{*#1}_{#3}#2}
\def\@proldf@wrt@#1#2{\prolsymbol^{*#1}#2}
\def\@proldf@#1{\@ifnextchar[{\@proldf@at{#1}}{\@proldf@@{#1}}}
\def\@proldf@at#1[#2]{\prolsymbol^*_{#2}#1}
\def\@proldf@@#1{\prolsymbol^*#1}
\def\prolsymbol{\mathcal{T}}
\makeatother

\begin{document}

\title{Discrete variational calculus\\ for double-bracket dissipation}

 \author[1]{Anthony Bloch} 
 \author[2]{Sebastián J. Ferraro}
\author[3]{David Mart{\'\i}n de 
Diego}
\author[4]{Shreyas Bharadwaj}
 \affil[1]{Department of Mathematics, University of Michigan, abloch@umich.edu}
 \affil[2]{
 Departamento de Matemática, Universidad Nacional del Sur (UNS)--CONICET, Bahía Blanca, Argentina, 
 sferraro@uns.edu.ar}
 \affil[3]{Instituto de Ciencias Matemáticas (ICMAT), Spain, david.martin@icmat.es}
 \affil[4]{Indian Institute of Technology Bombay, India, shreyasnb@iitb.ac.in}

\maketitle
\begin{abstract}
Discrete variational methods show excellent performance in numerical simulations of mechanical systems. In this paper, we adapt discrete variational integrators for the case of mechanical systems with double-bracket dissipation. In particular, we will work with forced  Euler--Poincar\'e and forced Lie--Poisson systems, and the case of interest for us will be when the coadjoint orbits remain invariant, but  the energy is decreasing along the orbit. This particular kind of dissipative system appears in various physical systems such as  satellites with dampers, geophysical fluids, plasma physics and  stellar dynamics. The proposed geometric integrator preserves the coadjoint orbits exactly. We highlight the advantages of this feature by comparing it with other general-purpose methods (including higher-order ones) across different numerical simulations.
\end{abstract}
{
  \setlength{\parskip}{0pt} % Removes extra paragraph spacing
  \tableofcontents
}
\section{Introduction}
For the numerical study of differential equations it is important to preserve the geometrical properties of the continuous flow as much as possible. For instance, one can aim to preserve the configuration manifold, first integrals, symplecticity, reversibility or volume. This is crucial for a qualitatively and quantitatively correct simulation of the continuous system. These types of integrators are called geometric integrators (see \cite{hairer,blanes, serna} and references therein). 
A particular case, variational integrators, which arise from a discrete variational principle, is one of a class of geometric integrators which are easily adapted to the numerical integration of different classes of mechanical systems \citep{marsden-west}. This makes these methods very useful for real-world applications as well as for other more general mechanical systems.

An interesting example is the case of Lagrangian systems subjected to external forces, including dissipative ones. Some of these systems have very interesting additional geometric features, and the idea of this paper is to study the possibility of preserving them using a modification of variational integrator techniques. In particular, we will work with forced  Euler--Poincar\'e and forced Lie--Poisson systems. A particular case  of interest for us will be when the underlying coadjoint orbit on which the system evolves remains invariant, but the energy is decreasing along orbits, since we are dealing with dissipative systems (see \cite{bloch1996euler}). This particular kind of dissipative system appears in different physical systems such as satellites with dampers, geophysical fluids, plasma physics and stellar dynamics.  
The final aim of this paper is to introduce  a new type of variational integrator preserving exactly the coadjoint orbits for systems with double-bracket dissipation. 
In \cite{bloch1996euler} the authors study the perturbation of the standard Lie--Poisson equations adding a particular dissipative term that preserves the coadjoint orbits. Such systems are typically expressed on the dual of a Lie algebra using the Lie--Poisson bracket modified with a symmetric bracket which introduces the dissipation. 
The metriplectic setting discussed in 
\cite{morrison} and \cite{Bloch-Morrison-Ratiu} is an interesting and instructive related or complementary  case (see also \cite{BFM2025}).
The rigid body serves as a running example throughout the paper. The dissipative forces are a simple model for internal friction, which does not exert any net torque on the body.

The paper is structured as follows. In Section~\ref{Section2}, we recall the Euler–Poincaré equations on a Lie group and their Hamiltonian formulation (the Lie–Poisson equations), highlighting the associated geometric invariants, such as coadjoint orbits and energy preservation. When external forces are introduced, these geometric invariants are, in general, lost. However, in Section~\ref{sectio3}, we describe a particular class of external forces that preserves the coadjoint orbits while dissipating energy. These forces are known as double-bracket dissipation.
After introducing this form of double-bracket dissipation via a constrained variational principle, Section~\ref{sec:Discrete_Lagrangian_formalism_for_DBD} presents a geometric integrator that exactly preserves the coadjoint orbits. Then, using retraction maps from the Lie algebra to the Lie group, we construct several geometric integrators adapted to different retractions (see \cite{Bou-Rabee2009}). Finally, in Section~\ref{sec5}, we compare our methods with general-purpose integrators, demonstrating their excellent performance and convergence to asymptotically stable equilibria.

\section{Euler--Poincar\'e and Lie--Poisson equations with forcing}\label{Section2}

\subsection{Euler--Poincar\'e equations with forcing}

Let $G$ be a Lie group. 
Left multiplication ${\mathcal L}_g$ allows us to trivialize the tangent bundle $TG$ and the cotangent bundle $T^*G$ as follows (see \cite{Abraham1978,marsden2013introduction,holm-book1,holm-book2}):
\begin{eqnarray*}
	TG&\to&G\times {\mathfrak g}\, ,\qquad (g, \dot{g})\longmapsto (g, g^{-1}\dot{g})=(g, T_g{\mathcal L}_{g^{-1}}\dot g)=(g, \xi)\; ,\\
	T^*G&\to&G\times {\mathfrak g}^*,\qquad  (g, \mu_g)\longmapsto (g, T^*_e {\mathcal L}_g(\mu_g))=(g, \mu)\; ,
\end{eqnarray*}
where ${\mathfrak g}=T_eG$ is the Lie algebra of $G$ and $e$ is the identity element in $G$. In this paper, we will work mainly with the left multiplication. Of course, it is possible to derive similar results using the right multiplication.

Given a Lagrangian $L\colon TG\rightarrow {\mathbb R}$,  after left trivialization, the classical Euler--Lagrange equations may be  rewritten as 
\begin{align*}
\frac{d}{dt}\left(\frac{ \delta L}{\delta \xi}\right)&=ad_{\xi}^*\frac{\delta L}{\delta\xi}+T^*_e{\mathcal L}_g\left(\frac{\delta L}{\delta g}\right)\\%\label{elt}\\
\dot{g}&=g\xi\,.
\end{align*}
We will assume, in the sequel, that  $L\colon TG\rightarrow {\mathbb R}$ is left invariant, meaning that $L(g,g\xi)=L(e,\xi)$ for all $g\in G$, $\xi\in\mathfrak{g}$; that is, the following reduced Lagrangian $l\colon {\mathfrak g}\rightarrow {\mathbb R}$: 
\[
l(\xi)=L(g, g\xi)=L(e,\xi)
\]
is well defined. 
Observe that $l$ can be seen as the  restriction of $L$ to ${\mathfrak g}=T_eG$. In this case, the corresponding Euler--Lagrange equations are: 
\begin{equation}\label{awr}
\frac{d}{dt}\left(\frac{ \delta l}{\delta \xi}\right)=ad_{\xi}^*\frac{\delta l}{\delta\xi}
\end{equation}
and they are known as the (left-invariant) {\bf Euler--Poincar\'e equations}.

To find a solution $t\mapsto g(t)$ of the Euler--Lagrange equations, with initial conditions $g(0)=g_0$ and $\dot{g}(0)=v_0$,  we first  solve the first order differential equation defined by the Euler--Poincar\'e equations: 
\[
\frac{d}{dt}\left(\frac{ \delta l}{\delta \xi}\right)=ad_{\xi}^*\frac{\delta l}{\delta\xi}
\]
where now the  initial condition  is $\xi(0)=g_0^{-1}v_0$ and, finally, with this solution $t\rightarrow \xi(t)$ we solve the so-called reconstruction equation: 
\[
\dot{g}(t)=g(t)\xi(t) \hbox{   with   } g(0)=g_0\; .
\]

The Euler--Poincar\'e equations are modified under the presence of forces that, in this paper,   are expressed mathematically as a map $f\colon {\mathfrak g}\rightarrow {\mathfrak g}^*$. The Euler--Poincar\'e equations with forcing are  
\[
\frac{d}{dt}\left(\frac{ \delta l}{\delta \xi}\right)=ad_{\xi}^*\frac{\delta l}{\delta\xi}+f(\xi)
\]
If $G$ is finite dimensional and we fix a basis $\{e_a\}$ of the Lie algebra ${\mathfrak g}$, inducing coordinates $(\xi^a)$ such that $\xi=\xi^a e_a$, then the Euler--Poincar\'e equations with forcing have the following expression in local coordinates:
\[
\frac{d}{dt}\left(\frac{ \delta l}{\delta \xi^a}\right)=C^c_{ba}\xi^b \frac{\partial l}{\partial \xi^c} +f_a
\]
where $C_{ab}^c$ are the structure constants of the Lie algebra ${\mathfrak g}$ defined by
$[e_a, e_b]=C_{ab}^c e_c$ 
 and $\langle f(\xi), e_a\rangle=f_a$.

\

\subsection{Lie--Poisson equations with forcing}

Given a Hamiltonian  $H\colon {\mathfrak g}^*\rightarrow {\mathbb R}$, the Lie--Poisson equations are
\begin{align}
\dot{\mu}&=ad_{{\delta H}/{\delta \mu}}^*\mu\label{elth}\\
\dot{g}&= g \frac{\delta H}{\delta \mu}=T_e{\mathcal L}_g\left(\frac{\delta H}{\delta \mu}\right)\, , \notag
\end{align}
where $\mu\in {\mathfrak g}^*$.

Of course, Equations \eqref{awr} and \eqref{elth} are related by the Legendre transformation when the Lagrangian $l$ is regular. This Legendre transformation is given by
\[
\begin{array}{rrcl}
{\mathcal F}l\colon &{\mathfrak g}&\longrightarrow&  {\mathfrak g}^*\\
&  \xi&\longmapsto & {\delta l}/{\delta \xi}
\end{array}
\]
Assuming that  $l$ is hyperregular, that is,  the Legendre map is a diffeomorphism, then there exists a unique $\xi$ such that $\mu=\frac{\partial l}{\partial \xi}(\xi)$, for each $\mu \in {\mathfrak g}^*$. Then the Hamiltonian is defined by 
\[
H(\mu)=\langle \mu, \xi\rangle - l(\xi)\; .
\]

%It is well known that both brackets are induced by reduction of the standard Lie bracket on $T^*G$ by right or left-%translation.
We note that ${\mathfrak g}^*$  is equipped with the Lie--Poisson bracket $\{\cdot, \cdot \}$:
\[
\{f, g\}(\mu)=-\left\langle \mu, \left[\frac{\delta f}{\delta \mu}, \frac{\delta g}{\delta \mu}\right] \right\rangle, \qquad f, g\in C^{\infty}({\mathfrak g}^*)\; .
\]
This bracket  exactly corresponds to the reduced bracket by standard  Poisson reduction \cite{marsden-ratiu} from
\[
\pi\colon (T^*G, \omega_G)\longrightarrow (T^*G/G\equiv {\mathfrak g}^*, \{\cdot , \cdot \})
\]
where $\pi(\mu_g)=[\mu_g]\equiv T_e^*{\mathcal L}_g(\mu_g)$.

The coadjoint orbit through a fixed  $\mu_0\in {\mathfrak g}^*$ is
\[
{\mathcal O}_{\mu_0}:=\left\{ Ad^*_{g^{-1}}\mu_0\; |\; g\in G\right\}\subseteq {\mathfrak g}^*\; .
\]
If $t\rightarrow \mu(t)$ is the solution of  the initial value problem
$\dot{\mu}=ad_{{\delta H}/{\delta \mu}}^*\mu$ with $\mu(0)=\mu_0$, then  we can deduce that 
$
\mu(t)\in {\mathcal O}_{\mu(0)}
$.
A generalized Casimir invariant (or Casimir for short) is a function $C\colon {\mathfrak g}^*\rightarrow {\mathbb R}$ such that 
$\{C, f\}=0$ for all $f\colon {\mathfrak g}^*\rightarrow {\mathbb R}$ or, using the definition of Lie--Poisson bracket: 
\[
\left[\frac{\partial C}{\partial \mu}, \xi\right]=0, \quad \forall \xi \in {\mathfrak  g}\; . 
\]

Given a Hamiltonian function $H\colon {\mathfrak g}^*\rightarrow {\mathbb R}$  the equations of motion are determined by 
\begin{equation}\label{plo}
\dot{f}=\{f, H\}=
%\Pi^{\sharp}(dH(\mu(t)))=
X_H(f)=\left(\frac{\partial f}{\partial \mu}\right)^T\Pi
\left(\frac{\partial H}{\partial \mu}\right),
\end{equation}
where $\Pi$ is the bivector field associated to the bracket $\{\cdot ,\cdot \}$. In coordinates, 
\[
\Pi(\mu)=(\Pi_{ab})\left( \mu_cC_{ba}^c\right),
\]
and $X_H$ is the corresponding Hamiltonian vector field:
\[
X_H=\mu_c C^c_{ab}\frac{\partial H}{\partial \mu^a}\frac{\partial}{\partial \mu^b}.
\]
It is well known that the flow $\Psi^H_t\colon {\mathfrak g}^*\rightarrow {\mathfrak g}^*$  of $X_H$ satisfies some geometric properties: 
\begin{enumerate}
	\item It preserves the linear Poisson bracket, that is
	\[
	\{f\circ \Psi_t, g\circ \Psi_t\}=\{f, g\}\circ \Psi_t, \qquad f, g\in C^{\infty}({\mathfrak g}^*)\,.
	\]
	\item It preserves the Hamiltonian
	\[
	H\circ \Psi_t=H\,.
	\]
	\item If all the coadjoint orbits are connected, Casimir functions are also preserved along each coadjoint orbit. 
\end{enumerate}

Now we add forces to our picture, in this case modeled as a map $\tilde{f}\colon {\mathfrak g}^*\rightarrow {\mathfrak g}^*$. 
If we start from a force on the Lagrangian side $f\colon {\mathfrak g}\rightarrow {\mathfrak g}^*$ then we define the force on the Hamiltonian side by taking 
$\tilde{f}\circ {\mathcal F}l=f$, in the regular case. The Lie--Poisson equations with forcing are modified as follows:
\[
\dot{\mu}=ad_{{\delta H}/{\delta \mu}}^*\mu +\tilde{f}(\mu).\; \\
\]
It is clear that in adding the force term we are losing, in general, the key  properties of the flow of the free system (preservation of the Hamiltonian, preservation of coadjoint orbits, etc.). But, in some cases of interest, as we will show in the next section, it is possible to derive forced systems dissipating energy but preserving the coadjoint orbits \citep{bloch1996euler}. 

\section{Double-bracket dissipation}\label{sectio3}

Consider now a force $f\colon {\mathfrak g}\rightarrow {\mathfrak g}^*$ with a particular form \citep{bloch1996euler}:
\[
f(\xi)=ad^*_{\varphi(\xi)} \frac{\delta l}{\delta \xi},
\]
where $\varphi\colon {\mathfrak g}\rightarrow {\mathfrak g}$ is an arbitrary map. Then, the forced Euler--Lagrange equations are: 
\begin{equation}\label{fel-1}
\frac{d}{dt}\left(\frac{\delta l}{ \delta \xi}\right)=ad_{\xi+\varphi(\xi)}^*	\frac{\delta l}{ \delta \xi}
\end{equation}

In the finite dimensional case we can write the previous equations in coordinates as follows:
\begin{equation}\label{fel2-1}
\frac{d}{dt}\left(\frac{\delta l}{ \delta \xi^b}\right)=C_{ab}^d (\xi^a+\varphi^a(\xi)) \frac{\delta l}{ \delta \xi^d}
\end{equation}

\begin{example}{Rigid body with a force determined by a map $\varphi\colon {\mathfrak g}\rightarrow {\mathfrak g}$}

{\rm 
In the case of ${\mathfrak g}={\mathbb R}^3$ with the usual vector cross product we have
\[
\frac{d}{dt}\left(\frac{\delta l}{ \delta {\mathbf \Omega}}\right)=\frac{\delta l}{ \delta \mathbf \Omega}\times ({\mathbf  \Omega}+\varphi({\mathbf \Omega}))
\]
In particular if 
\begin{equation}\label{eq:usual_RB_lagrangian}
l(\mathbf{\Omega})=\frac{1}{2}\mathbb{I}\mathbf{\Omega}\cdot \mathbf{\Omega}=\frac{1}{2}(I_1\Omega_1^2+I_2\Omega_2^2+I_3\Omega_3^2)
\end{equation}
where $\mathbb{I}=\operatorname{diag}(I_1,I_2,I_3)$, 
then equations (\ref{fel-1}) are
\begin{align*}%\label{relaxed}
	I_1\dot{\Omega}_1&=(I_2-I_3)\left(\Omega_2\Omega_3+\varphi_2({\mathbf \Omega})\Omega_3 -\varphi_3({\mathbf \Omega})\Omega_2\right)\notag\\
	I_2\dot{\Omega}_2&=(I_3-I_1)\left(\Omega_3\Omega_1+\varphi_3({\mathbf \Omega})\Omega_1-\varphi_1({\mathbf \Omega})\Omega_3\right)\notag\\
	I_3\dot{\Omega}_3&=(I_1-I_2)\left(\Omega_1\Omega_2+\varphi_1({\mathbf \Omega})\Omega_2-\varphi_2({\mathbf \Omega})\Omega_1\right)\,.\tag*{\ensuremath{\diamond}}
\end{align*}
}
\end{example}

The following proposition will be useful for the derivation of geometric integrators. 
\begin{proposition}\label{propo1}
Let $l\colon {\mathfrak g}\rightarrow {\mathbb R}$ be a Lagrangian function defined in the Lie algebra of a Lie group $G$ and $\varphi\colon {\mathfrak g}\rightarrow {\mathfrak g}$ be an arbitrary map. Then the following are equivalent: 
\begin{enumerate}	
	\item[a)] The forced Euler--Poincar\'e equations holds: 
	\[
	\frac{d}{dt}\left(\frac{\delta l}{ \delta \xi}\right)=ad_{\xi+\varphi(\xi)}^*	\frac{\delta l}{ \delta \xi}.
	\]
	\item[b)] The generalized variational principle
	\[
	{\boldsymbol \delta}\int_0^T l(\xi(t))\; dt=0,
	\]
	holds on ${\mathfrak g}$, using variations of the form: 
	\[
	{\boldsymbol \delta} \xi=\dot \eta +[\xi+\varphi(\xi), \eta], 
	\]
	for any $\eta\colon [0, T]\rightarrow {\mathfrak g}$ vanishing at the initial and end points: $\eta(0)=\eta(T)=0$. 
	\end{enumerate}
\end{proposition}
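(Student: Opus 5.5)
The plan is to mimic the standard proof of the Euler--Poincar\'e variational principle, with $\xi$ replaced by $\xi+\varphi(\xi)$ inside the bracket. The key point is that the constrained variations are specified at the level of $\mathfrak g$. The integrand depends on $\xi$ only through $l$, so we never differentiate $\varphi$. We interpret ${\boldsymbol \delta}\int_0^T l(\xi)\,dt$ as $\int_0^T \langle \frac{\delta l}{\delta\xi}, {\boldsymbol\delta}\xi\rangle\,dt$. Here ${\boldsymbol\delta}\xi$ is prescribed by the formula in b) along the given curve $\xi(t)$, which is where the word ``generalized'' enters.

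First I substitute ${\boldsymbol \delta}\xi=\dot\eta+[\xi+\varphi(\xi),\eta]$ and write
\[
{\boldsymbol \delta}\int_0^T l(\xi)\,dt=\int_0^T\left\langle \frac{\delta l}{\delta\xi},\dot\eta\right\rangle dt+\int_0^T\left\langle \frac{\delta l}{\delta\xi},ad_{\xi+\varphi(\xi)}\eta\right\rangle dt .
\]
In the first term I integrate by parts; the boundary term $\langle \frac{\delta l}{\delta\xi},\eta\rangle\big|_0^T$ vanishes because $\eta(0)=\eta(T)=0$. In the second term I use the definition of the coadjoint operator, $\langle \mu, ad_\zeta\eta\rangle=\langle ad^*_\zeta\mu,\eta\rangle$. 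This gives
\[
{\boldsymbol \delta}\int_0^T l(\xi)\,dt=\int_0^T\left\langle -\frac{d}{dt}\frac{\delta l}{\delta\xi}+ad^*_{\xi+\varphi(\xi)}\frac{\delta l}{\delta\xi},\ \eta\right\rangle dt .
\]

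For a)$\Rightarrow$b), the bracketed term vanishes identically, so the variation is zero for every admissible $\eta$. For b)$\Rightarrow$a), the integral vanishes for all smooth $\eta$ with fixed endpoints. The fundamental lemma of the calculus of variations, applied componentwise in a basis $\{e_a\}$ in the finite-dimensional case, then forces the bracketed covector to vanish, which is the forced Euler--Poincar\'e equation. Equivalently, one can phrase it in coordinates via \eqref{fel2-1}, using $\langle ad^*_\zeta\mu,e_b\rangle=C^d_{ab}\zeta^a\mu_d$.

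The only delicate step is conceptual rather than computational: the variations are not generated by a variation of a curve in $G$ in the usual way. To avoid any issue, I will state explicitly that ${\boldsymbol \delta}$ denotes the directional derivative of the action along the prescribed ${\boldsymbol\delta}\xi$. If one wants a family-of-curves interpretation, I would take $\xi_\epsilon=\xi+\epsilon\,{\boldsymbol\delta}\xi$. This produces the same first-order expression, and no derivative of $\varphi$ ever appears. Sign conventions for $ad^*$ must match those in \eqref{awr}, which I will check against the unforced case $\varphi=0$.
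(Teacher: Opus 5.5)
Your proposal is correct and follows the paper's proof essentially verbatim. You substitute ${\boldsymbol \delta}\xi=\dot\eta+[\xi+\varphi(\xi),\eta]$, integrate the $\dot\eta$ term by parts using $\eta(0)=\eta(T)=0$, and rewrite the bracket term via $ad^*_{\xi+\varphi(\xi)}$ to reach the integrand $\bigl(ad^*_{\xi+\varphi(\xi)}\frac{\delta l}{\delta\xi}-\frac{d}{dt}\frac{\delta l}{\delta\xi}\bigr)\eta$, exactly as the paper does; your explicit use of the fundamental lemma for b)$\Rightarrow$a) and your clarification of what ${\boldsymbol\delta}$ means only make precise steps the paper leaves implicit.
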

\begin{proof}
The equivalence follows from the direct computation
\begin{align*}
0&={\boldsymbol \delta} \int_0^T l(\xi(t))\; dt=\int_0^T \frac{\delta l}{\delta \xi}{\boldsymbol \delta}\xi\; dt\\ 
	&=\int_0^T \frac{\delta l}{\delta \xi}(\dot \eta +[\xi+\varphi(\xi), \eta])\, dt\\ 
	&=\int_0^T \left[ \frac{\delta l}{\delta \xi}[\xi+\varphi(\xi), \eta]-\frac{d}{dt}\left(\frac{\delta l}{\delta \xi}\right)\eta \right]\; dt\\
	&=\int_0^T \left[ ad^*_{\xi+\varphi(\xi)}\frac{\delta l}{\delta \xi}-\frac{d}{dt}\left(\frac{\delta l}{\delta \xi}\right)\right]\eta\; dt\; . \qedhere
	\end{align*}
\end{proof}

The Hamiltonian version of Equation (\ref{fel2-1}) is precisely:

\begin{equation}\label{flp-2}
\frac{d\mu}{dt}=ad^*_{\delta H/\delta\mu +\varphi(\delta H/\delta\mu)} \mu.
\end{equation}

\begin{proposition}\label{proposition-var}
If $\mu\colon [0, T]\rightarrow {\mathfrak g}^*$ is a solution of Equation \eqref{fel2-1} then along this solution
\begin{align*}
	\frac{dC}{dt}&=0 \quad \hbox{for any Casimir function},\\
	\frac{dH}{dt}&= \left\langle \mu, \left[ \varphi\left(\frac{\delta H}{\delta \mu}\right), \frac{\delta H}{\delta \mu}\right]\right\rangle.
\end{align*}
\end{proposition}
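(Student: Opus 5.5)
The plan is a direct computation along a solution of the Hamiltonian form \eqref{flp-2}, which is how we read ``a solution of Equation \eqref{fel2-1}'' for a curve $\mu(t)$ in ${\mathfrak g}^*$. Throughout we write $\zeta(t):=\frac{\delta H}{\delta\mu}(\mu(t))+\varphi\bigl(\frac{\delta H}{\delta\mu}(\mu(t))\bigr)\in{\mathfrak g}$, so the equation reads $\dot\mu=ad^*_{\zeta}\mu$. We use only the chain rule for a function $F\colon{\mathfrak g}^*\to\R$ along the curve, namely $\frac{d}{dt}F(\mu(t))=\langle \dot\mu, \frac{\delta F}{\delta\mu}\rangle$, together with the definition $\langle ad^*_\zeta\mu,\eta\rangle=\langle\mu,[\zeta,\eta]\rangle$.

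For a Casimir $C$ we compute
\[
\frac{dC}{dt}=\left\langle ad^*_{\zeta}\mu,\frac{\delta C}{\delta\mu}\right\rangle=\left\langle \mu,\left[\zeta,\frac{\delta C}{\delta\mu}\right]\right\rangle .
\]
This vanishes by the defining property recalled in Section~\ref{Section2}, namely $[\frac{\partial C}{\partial\mu},\xi]=0$ for every $\xi\in{\mathfrak g}$, applied with $\xi=\zeta$. We stress that $\zeta$ is an arbitrary element of ${\mathfrak g}$ here, so no property of $\varphi$ is needed. This is precisely why the double-bracket-type force preserves Casimirs: it still enters through a coadjoint action.

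For the energy, the same chain rule applied with $F=H$ gives
\[
\frac{dH}{dt}=\left\langle \mu,\left[\frac{\delta H}{\delta\mu}+\varphi\left(\frac{\delta H}{\delta\mu}\right),\frac{\delta H}{\delta\mu}\right]\right\rangle .
\]
By bilinearity this splits into two terms. The first is $\langle\mu,[\frac{\delta H}{\delta\mu},\frac{\delta H}{\delta\mu}]\rangle$, which vanishes by antisymmetry of the bracket; this is the usual energy conservation of the free Lie--Poisson flow. The remaining term is exactly $\langle\mu,[\varphi(\frac{\delta H}{\delta\mu}),\frac{\delta H}{\delta\mu}]\rangle$, which is the claimed formula.

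There is no substantive obstacle here. The only point requiring care is sign and convention bookkeeping: we must check that the paper's pairing $\langle ad^*_\zeta\mu,\eta\rangle=\langle\mu,[\zeta,\eta]\rangle$ is consistent with the coordinate form \eqref{fel2-1}, so that the order of the bracket in the energy formula comes out as stated. If the Lagrangian form is preferred, one would instead pass through the Legendre transform $\mu=\delta l/\delta\xi$, $\xi=\delta H/\delta\mu$, and the same computation applies verbatim.
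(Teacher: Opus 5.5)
Your proposal is correct and follows the paper's proof: the same chain-rule computation $\frac{dC}{dt}=\langle ad^*_{\zeta}\mu,\frac{\delta C}{\delta\mu}\rangle=\langle\mu,[\zeta,\frac{\delta C}{\delta\mu}]\rangle=0$ via the Casimir condition, with the energy identity (which the paper only says is ``proven in a similar way'') obtained exactly as you do, by killing $\langle\mu,[\frac{\delta H}{\delta\mu},\frac{\delta H}{\delta\mu}]\rangle$ through antisymmetry. Reading \eqref{fel2-1} through its Hamiltonian form \eqref{flp-2} is also what the paper does. The argument works equally well under the standard, weaker Casimir condition $\langle\mu,[\frac{\delta C}{\delta\mu},\xi]\rangle=0$ for all $\xi$.
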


\begin{proof}
	Observe that 
	\begin{align*}
	\frac{dC}{dt}(t)&=\langle \dot{\mu}, \frac{\delta C}{\delta \mu}\rangle= 
    \langle ad^*_{\delta H/\delta\mu +\varphi(\delta H/\delta\mu)} \mu, \frac{\delta C}{\delta \mu}\rangle
    \\
	&= \left\langle \mu,  \left[ \frac{\delta H}{\delta \mu}+\varphi\left(\frac{\delta H}{\delta \mu}\right), \frac{\delta C}{\delta \mu}\right] \right\rangle=0,
		\end{align*}
	by the definition of the Casimir for the Lie--Poisson bracket.
	The other equality is proven in a similar way.
\end{proof}

\begin{example}{Rigid body. }{\rm 
	
	The rigid body equations in body coordinates are
	\[
	\dot{M}=M \times ({\mathbf\Omega}+\varphi({  \mathbf \Omega})),
	\]
	where $M=\mathbb{I}{\mathbf \Omega}=\frac{\partial l}{\partial \mathbf \Omega}$.}
\end{example}

We are now interested in constructing a particular family of mappings $\varphi\colon {\mathfrak g}\rightarrow {\mathfrak g}$ which reflect the {\bf dissipative} nature of a force; that is, we want to impose the following property:  
\[
\frac{dH}{dt}\leq 0.
\]
We will assume in the sequel that $G$ is finite dimensional since we are interested in designing numerical methods. 

Alternatively, the dissipation of energy expressed in the  Lagrangian formalism is
\begin{align*}
	\frac{dE_l}{dt}(\mu)&=\left\langle \frac{\partial l}{\partial \xi}, \left[ \varphi(\xi), \xi\right]\right\rangle\\
	&= \frac{\partial l}{\partial \xi^d}C_{ab}^d \varphi^a(\xi)\xi^b\leq 0
\end{align*}
where $ E_l(\xi)=\frac{\partial l}{\partial \xi}(\xi)\xi-l(\xi)$.

If $k\colon {\mathfrak g}^*\times {\mathfrak g}^*\rightarrow {\mathbb R}$ is a semi-definite positive scalar product and 
$k^\sharp\colon {\mathfrak g}^*\rightarrow {\mathfrak g}$ the induced musical isomorphism ($\langle \mu_2, k^\sharp(\mu_1)\rangle=k(\mu_1, \mu_2) $) then an obvious choice for $\varphi\colon {\mathfrak g}\rightarrow {\mathfrak g}$ is
\begin{equation}\label{varphi}
\varphi(\xi)=k^{\sharp}\left(ad^*_{\xi}\frac{\delta l}{\delta \xi}(\xi)\right)
\end{equation}
 and then
\[
\frac{dE_l}{dt}=-\left\langle ad_{\xi}^*\frac{\delta l}{\delta \xi}, \varphi(\xi)\right\rangle =-k(ad^*_{\xi}\frac{\delta l}{\delta \xi}(\xi), ad^*_{\xi}\frac{\delta l}{\delta \xi}(\xi))\leq 0
\]
In coordinates
\[
\varphi(\xi)= k^{ae}\frac{\partial l}{\partial \xi^d}C_{eb}^d \xi^b e_a
\]
where $\{e_a\}$ is a basis of the Lie algebra with dual basis $\{e^a\}$ of ${\mathfrak g}^*$ and $k(e^a, e^b)=k^{ab}$. 

\begin{example}{Forced rigid body (continued).   }
	{\rm 
In the case of the rigid body, considering the scalar product
\[
k ( (x_1, x_2, x_3), (y_1, y_2, y_3))=\alpha (x_1y_1+x_2y_2+x_3y_3)
\]
with $\alpha$ a positive constant, 
then 
\[
\varphi({\mathbf \Omega})=\alpha \mathbb{I}\mathbf{\Omega}\times\mathbf{\Omega}=(
\alpha(I_2-I_3)\Omega_2\Omega_3, 
\alpha(I_3-I_1)\Omega_3\Omega_1, 
\alpha(I_1-I_2)\Omega_1\Omega_2)
\]
and the equations of motion are
\begin{align*}%\label{relaxed-3}
	I_1\dot{\Omega}_1&=(I_2-I_3)\Omega_2\Omega_3+	\alpha(I_3-I_1)\Omega_1\Omega_3^2 -	\alpha(I_1-I_2)\Omega_1\Omega_2^2\\
	I_2\dot{\Omega}_2&=(I_3-I_1)\Omega_3\Omega_1+	\alpha(I_1-I_2)\Omega_2\Omega^2_1-\alpha(I_2-I_3)\Omega_2\Omega^2_3\\\
	I_3\dot{\Omega}_3&=(I_1-I_2)\Omega_1\Omega_2+	\alpha(I_2-I_3)\Omega_3\Omega^2_2-\alpha(I_3-I_1)\Omega_3\Omega^2_1\ .
\end{align*}

Observe that the equations are 
\begin{equation}\label{eq:diss_rbd}
\dot{{\mathbf M}}={\mathbf M}\times {\mathbf \Omega}+\alpha {\mathbf M}\times ({\mathbf M}\times {\mathbf \Omega}),
\end{equation}
where ${\mathbf M}=\mathbb{I}{\mathbf \Omega}$ (see  \citep{bloch1996euler} and  references therein). \hfill $\diamond$
}
\end{example}
Define on ${\mathfrak g}^*$ the bracket of functions
\[
\{\hspace{-0.1cm}\{f, g\}\hspace{-0.1cm}\}(\mu)=-\left\langle \mu, \left[ \varphi\left(\frac{\delta f}{\delta \mu}\right), \frac{\delta g}{\delta \mu}\right]\right\rangle,
\]
where $f, g \in C^{\infty}({\mathfrak g}^*)$.
In the case of the map $\varphi$ specified in \eqref{varphi} we have that
\[
\{\hspace{-0.1cm}\{f, g\}\hspace{-0.1cm}\}(\mu)=-\left\langle \mu, \left[ k^\sharp (ad^*_{\delta f/\delta \mu}\mu), \frac{\delta g}{\delta \mu}\right] \right \rangle,
\]
is a symmetric bracket, that is $\{\hspace{-0.1cm}\{f, g\}\hspace{-0.1cm}\}=\{\hspace{-0.1cm}\{g, f\}\hspace{-0.1cm}\}$.

Then the equations of motion in double-bracket form are: 
\[
\dot{f}=\{f, H\}-\{\hspace{-0.1cm}\{f, H\}\hspace{-0.1cm}\},
\]
or in matrix form
\[
\dot{z}=\Pi(z) \nabla H(z)- S(z)\nabla H (z),
\]
where $S=\Pi^T K\Pi=(S_{ad})=(\Pi_{ba}k^{bc}\Pi_{cd})$, where $K$ is the matrix corresponding to the linear map $k^{\sharp}$.

We know by construction that
\[
S\nabla C=0,  \hbox{ for all Casimirs } C\colon {\mathfrak g}^*\rightarrow {\mathbb R}. 
\]

\section{Discrete Lagrangian formalism for double-bracket dissipation}\label{sec:Discrete_Lagrangian_formalism_for_DBD}

%(See for more details \cite{weinstein96,MMM06Grupoides}).

In this section, we propose a discretization of the  equations with double-bracket dissipation  based on Proposition \ref{proposition-var}  (``DDB methods'').

As a set of discrete curves we consider sequences of elements of the Lie group $G$, that is, $\{w_k\}_{k=1, \ldots, N}$ (see \cite{MaPeSh99} for details).

Fixing $w_{f}\in G$, we define the set of admissible pairs
\[
C_{w_f}^2=\{(w_1, w_2)\in G\times G\; |\; w_1w_2=w_f\}\; .
\]
A tangent vector to the manifold $C_{w_f}^2$ is a tangent vector at $t=0$ of a curve in $C_{w_f}^2$
\[
t\in (-\epsilon, \epsilon)\subseteq \R\longrightarrow (c_1(t), c_2(t))
\]
where $c_i(t)\in G$, $c_1(t)c_2(t)=w_f$ and $c_1(0)=w_1$ and $c_2(0)=w_2$. 
All such curves are given by
\begin{equation}\label{curves}
c(t)=(w_1\gamma(t), \gamma(t)^{-1}w_2)
\end{equation}
for an arbitrary $\gamma(t)\in G$ with $t\in (-\epsilon, \epsilon)$ and $\gamma(0)=e$, where $e$ is the neutral element of $G$. Observe that if $(c_1(t), c_2(t))$ is a curve in $C_{w_f}^2$ then if we define $\gamma(t)=w_1^{-1}c_1(t)$, then 
\begin{align*}
w_1\gamma(t)&=c_1(t)\\
\gamma(t)^{-1}w_2&=(c_1(t))^{-1}w_1w_2=(c_1(t))^{-1}w_f=c_2(t)
\end{align*}

To generate the discrete dynamics, as in the calculus of variations, we prescribe a Lagrangian function, called a discrete Lagrangian defined as $l_d\colon G\rightarrow {\mathbb R}$. 
Given $l_d\colon G\rightarrow \R$, we define the {\bf discrete action sum} by
\[
\begin{array}{rrcl}
S_{l_d}\colon&C^2_w&\longrightarrow&\R\\
&(w_1, w_2)&\longmapsto& l_d(w_1)+l_d(w_2)
\end{array}
\]

\begin{definition}{\bf Discrete Hamilton's principle}.
	Given $w_f\in G$, then $(w_1, w_2)\in C_{w_f}^2$ is a solution of the discrete Lagrangian system determined by $l_d\colon G\rightarrow \R$ if and only if $(w_1, w_2)$ is a critical point of $S_{l_d}$.
\end{definition}

We characterize the critical points  using the curves defined in (\ref{curves}) as follows
\begin{align*}
	0&= \frac{d}{dt}\Big|_{t=0}S_{l_d}(c(t))\\
	&= \frac{d}{dt}\Big|_{t=0}\left( l_d(w_1\gamma(t))+l_d(\gamma(t)^{-1}w_2)\right)\\
	&= d(l_d\circ {\mathcal L}_{w_1})(e)(\xi)- d(l_d\circ {\mathcal R}_{w_2})(e)(\xi)
\end{align*}
where $\xi=\dot{h}(0)$. 

Alternatively, we can write these equations as 
\begin{equation}\label{dep}
0=\lvec{\xi}(w_1)(l_d)-\rvec{\xi}(w_2)(l_d)\; , \quad \forall \xi \in {\mathfrak g}
\end{equation}
which are called {\bf discrete Euler--Poincar\'e equations}. Here $\lvec{\xi}(w)=T_e{\mathcal L}_{w}(\xi)$ and   $\rvec{\xi}(w)=T_e{\mathcal R}_{w}(\xi)$ are the left- and right-invariant vector fields corresponding to $\xi$, respectively. 

Also it is possible to define the two discrete Legendre transformations  (see \cite{MMM}) by:  
$Fl_d^-\colon  G\rightarrow {\mathfrak g}^*$ and $Fl_d^+\colon  G\rightarrow {\mathfrak g}^*$ by
\begin{align*}
	Fl_d^-(w)&={\mathcal L}_w^* dl_d(w)\\
	Fl_d^+(w)&={\mathcal R}_w^* dl_d(w).
\end{align*}
So, if we define
\begin{equation}\label{eq:def_of_mu}
M_k=Fl_d^+(w_k)={\mathcal R}_{w_k}^* dl_d(w_k),
\end{equation}
then Equations \eqref{dep} are equivalent to
\[
M_{k+1}=Fl_d^+(w_{k+1})=Fl_d^-(w_k)=Ad^*_{w_k}M_k,
\]
which in this case are called {\bf discrete Lie--Poisson equations}. Then, it defines an implicit map $M_k \mapsto M_{k+1}$ preserving the Lie--Poisson structure and, in particular, the coadjoint orbits. 

From the previous discussion, it is interesting to observe that the discrete Euler--Poincar\'e equations are derived as follows. 
\begin{proposition}\label{propo1discrete}
Let $l_d\colon G\rightarrow {\mathbb R}$ be a discrete Lagrangian function defined in a Lie group $G$. Then the following are equivalent: 
\begin{enumerate}	
	\item[a)] The discrete Euler--Poincar\'e equations hold: 
	\[
0=\lvec{\xi}(w_1)(l_d)-\rvec{\xi}(w_2)(l_d)\; , \quad \forall \xi \in {\mathfrak g}.
\]
	\item[b)] The generalized variational principle
	\[
	{\boldsymbol \delta}\sum_{k=0}^{N-1}l_d (w_k)\; =0,
	\]
 holds for variations of the form: 
	\[
	{\boldsymbol \delta} w_k=\eta_k w_k+w_k\eta_{k+1}, 
	\]
	for any $\{\eta_k\}_{k=0}^{N}$
    with $\eta_k\in {\mathfrak g}$ vanishing at the initial and end points: $\eta_0=\eta_N=0$. 
	\end{enumerate}
\end{proposition}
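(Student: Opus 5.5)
The plan is to mimic the proof of Proposition~\ref{propo1}, replacing integration by parts with a discrete summation by parts (a reindexing of the sum). First fix the sign convention so that it agrees with the admissible curves \eqref{curves}. The variation induced on the pair $(w_k,w_{k+1})$ by a curve $\gamma_{k+1}(t)$ with $\gamma_{k+1}(0)=e$ and $\dot\gamma_{k+1}(0)=\eta_{k+1}$ is $w_k\eta_{k+1}$ on $w_k$ and $-\eta_{k+1}w_{k+1}$ on $w_{k+1}$. Hence the variation consistent with $w_{k}w_{k+1}\cdots$ being held fixed at the endpoints is
\[
{\boldsymbol \delta} w_k=w_k\eta_{k+1}-\eta_k w_k .
\]
I will work with this form and read the displayed variation in b) in this sense, which is the sign that produces \eqref{dep}. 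With the literal ``$+$'' the same computation goes through verbatim, except that the resulting equation has a $+$ between the two vector-field terms.

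The key step is the direct computation
\[
{\boldsymbol \delta}\sum_{k=0}^{N-1} l_d(w_k)=\sum_{k=0}^{N-1} dl_d(w_k)\big(w_k\eta_{k+1}-\eta_kw_k\big)
=\sum_{k=0}^{N-1}\Big(\lvec{\eta_{k+1}}(w_k)(l_d)-\rvec{\eta_{k}}(w_k)(l_d)\Big),
\]
using $\lvec{\xi}(w)=T_e{\mathcal L}_w\xi$ and $\rvec{\xi}(w)=T_e{\mathcal R}_w\xi$. Next, shift the index in the first sum ($k+1\mapsto k$) and use $\eta_0=\eta_N=0$ to discard the boundary terms. This collects everything as
\[
\sum_{k=1}^{N-1}\Big(\lvec{\eta_k}(w_{k-1})(l_d)-\rvec{\eta_k}(w_k)(l_d)\Big).
\]
Each summand is linear in $\eta_k$, so it equals $\langle Fl_d^-(w_{k-1})-Fl_d^+(w_k),\eta_k\rangle$.

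For b)$\Rightarrow$a), the interior $\eta_1,\dots,\eta_{N-1}$ are arbitrary and independent elements of ${\mathfrak g}$. Choosing all but one of them equal to zero shows that every coefficient must vanish, which is exactly \eqref{dep} for each consecutive pair $(w_{k-1},w_k)$. For a)$\Rightarrow$b), if \eqref{dep} holds for all consecutive pairs, every summand vanishes and so does the variation. The case $N=2$ reproduces precisely the critical-point computation for $C^2_{w_f}$.

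The only delicate point is the bookkeeping of left versus right translations and of the sign, so that the two contributions to $\eta_k$ come from $w_{k-1}$ (left-invariant field) and $w_k$ (right-invariant field) and match \eqref{dep}. I will verify this by checking the $N=2$ case against the curves \eqref{curves} before writing the general reindexing. Everything else is linear algebra.
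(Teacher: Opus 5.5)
Your proposal is correct and matches the paper's approach. The paper gives no separate proof of this proposition, but the intended argument is the critical-point computation with the curves \eqref{curves} together with the proof of Theorem~\ref{propo2} specialized to $\varphi_d\equiv e$; that is exactly your direct variation followed by reindexing with $\eta_0=\eta_N=0$. Your sign correction is also right: the variation should be ${\boldsymbol \delta} w_k=-\eta_k w_k+w_k\eta_{k+1}$, as in \eqref{wer} with $\varphi_d\equiv e$, because the literal ``$+$'' would yield $\lvec{\xi}(w_{k-1})(l_d)+\rvec{\xi}(w_k)(l_d)=0$ rather than \eqref{dep}.
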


\subsection{Forced Lie--Poisson integrators preserving the coadjoint orbits}

Motivated by Proposition \ref{propo1discrete}, we describe the following discrete generalized variational principle:  
\begin{theorem}\label{propo2}
	Let $l_d\colon G\rightarrow {\mathbb R}$ be a Lagrangian function defined on a Lie group $G$ and $\varphi_d\colon G\rightarrow G$ an arbitrary map. Then the following conditions are equivalent: 
	\begin{enumerate}	
		\item[a)] The sequence $\{w_k\}_{k=0}^{N-1}$ of elements of $G$ satisfies the  discrete forced Euler--Poincar\'e equations
%	
%	0=\lvec{\xi+m_d(\xi)}(g_k)(l_d)-\rvec{\xi+m_d(\xi)}(g_{k+1})(l_d)\; , \quad \forall \xi \in {\mathfrak g}
%	\end{equation}
%	or, alternatively, 
\begin{equation}\label{dep-1}
	0={\mathcal R}^*_{w_{k+1}} dl_d(w_{k+1})-Ad^*_{\varphi_d(w_{k})}{\mathcal L}^*_{w_k}dl_d(w_k),
\end{equation}
	for $k=0, \dots, N-2$. 
		\item[b)] The discrete generalized variational principle
		\[
		{\boldsymbol \delta} \sum_{k=0}^{N-1} l_d(w_k)=0,
		\]
		holds for variations of the form: 
		\begin{equation}\label{wer}
		{\boldsymbol \delta} w_k=-\eta_k w_k +  w_k (Ad_{\varphi_d(w_{k})}\eta_{k+1}),
		\end{equation}
		for any $\{\eta_k\}_{k=0}^N$ with $\eta_0=\eta_N=0$. 
	\end{enumerate}
\end{theorem}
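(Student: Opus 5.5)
Direct computation, mirroring the proof of Proposition~\ref{propo1} and the derivation of \eqref{dep}. First compute the variation of the action sum. Since $l_d\colon G\to\mathbb R$, we have ${\boldsymbol \delta}\sum_k l_d(w_k)=\sum_k\langle dl_d(w_k),{\boldsymbol \delta} w_k\rangle$. We insert the admissible variations \eqref{wer} and split each term into its right-translated and left-translated parts:
\[
\langle dl_d(w_k),-\eta_k w_k\rangle=-\langle {\mathcal R}^*_{w_k}dl_d(w_k),\eta_k\rangle ,
\]
\[
\langle dl_d(w_k),w_k Ad_{\varphi_d(w_k)}\eta_{k+1}\rangle=\langle Ad^*_{\varphi_d(w_k)}{\mathcal L}^*_{w_k}dl_d(w_k),\eta_{k+1}\rangle .
\]
Here we use $\langle \mu, Ad_g\eta\rangle=\langle Ad^*_g\mu,\eta\rangle$, with the same convention for $Ad^*$ as in the discrete Lie--Poisson equations $M_{k+1}=Ad^*_{w_k}M_k$. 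We use the identity ${\mathcal R}_{w}^*dl_d(w)(\eta)=dl_d(w)(\eta w)$, and similarly for ${\mathcal L}^*_w$, understood as $T_e$-pullbacks as in the definition of $Fl_d^\pm$.

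Next, reindex the second sum by $j=k+1$, so both sums pair a covector with $\eta_j$. The boundary terms drop out: $\eta_0=0$ kills the $k=0$ term of the first sum, and $\eta_N=0$ kills the $k=N-1$ term of the second. The sign convention of \eqref{wer} (the minus on $\eta_kw_k$) makes the result read
\[
{\boldsymbol \delta}\sum_{k=0}^{N-1}l_d(w_k)=-\sum_{k=0}^{N-2}\Big\langle {\mathcal R}^*_{w_{k+1}}dl_d(w_{k+1})-Ad^*_{\varphi_d(w_k)}{\mathcal L}^*_{w_k}dl_d(w_k),\ \eta_{k+1}\Big\rangle .
\]

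Finally, the interior $\eta_1,\dots,\eta_{N-1}$ are arbitrary and independent elements of $\mathfrak g$. The sum therefore vanishes for all admissible variations if and only if each bracketed covector in $\mathfrak g^*$ vanishes, which is exactly \eqref{dep-1} for $k=0,\dots,N-2$. This gives a) $\Leftrightarrow$ b).

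The main obstacle is bookkeeping rather than anything conceptual. The index ranges must match: the sequence is $w_0,\dots,w_{N-1}$ while $\eta$ runs over $0,\dots,N$, and the $k=N-1$ term of the second sum carries $\eta_N$. The pullback/Ad$^*$ conventions must also be consistent, so that the $Ad^*$ appearing is $Ad^*_{\varphi_d(w_k)}$ and not its inverse. I would check this against the unforced case $\varphi_d\equiv e$, where the result must reduce to Proposition~\ref{propo1discrete} and $Fl_d^+(w_{k+1})=Fl_d^-(w_k)$.
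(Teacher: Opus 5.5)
Your proposal is correct and is essentially the paper's own proof. You expand $\sum_k\langle dl_d(w_k),{\boldsymbol \delta} w_k\rangle$ with the variations \eqref{wer} and rewrite the two pieces as $-\langle{\mathcal R}^*_{w_k}dl_d(w_k),\eta_k\rangle$ and $\langle Ad^*_{\varphi_d(w_k)}{\mathcal L}^*_{w_k}dl_d(w_k),\eta_{k+1}\rangle$, then shift the index using $\eta_0=\eta_N=0$ and conclude from the arbitrariness of $\eta_1,\dots,\eta_{N-1}$; you are in fact more explicit than the paper about the boundary terms and the converse direction. For your unforced sanity check, the agreement with Proposition~\ref{propo1discrete} holds at the level of the equations $Fl_d^+(w_{k+1})=Fl_d^-(w_k)$ but not for the variation formula itself, which the paper prints with $+\eta_k w_k$ rather than $-\eta_k w_k$ (apparently a sign typo).
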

\begin{proof}
	
	Computing the variation of the discrete action
	\begin{align*}
0&={\boldsymbol \delta} \sum_{k=0}^{N-1} l_d(w_k)=\sum_{k=0}^{N-1} \langle dl_d(w_k),{\boldsymbol \delta} w_k\rangle \\
 &=\sum_{k=0}^{N-1} \langle dl_d(w_k), -\eta_k w_k +  w_k (Ad_{\varphi_d(w_{k})}\eta_{k+1})\rangle\\
 &=-\sum_{k=0}^{N-2}\left\langle {\mathcal R}^*_{w_{k+1}}dl_d(w_{k+1}), \eta_{k+1}\right\rangle 
+\sum_{k=0}^{N-2}\left\langle Ad_{\varphi_d(w_{k})}^* {\mathcal L}^*_{w_{k}} dl_d(w_k), \eta_{k+1}\right\rangle 
	\end{align*}
we obtain Equation~\eqref{dep-1}. 	
\end{proof}

Defining, as in Equation~\eqref{eq:def_of_mu}, the \textbf{discrete momentum}  
\begin{equation}\label{eq:discrete_momentum_Mk_with_R}
	M_k={\mathcal R}_{w_k}^*dl_d(w_k),
\end{equation}
we can write Equation~\eqref{dep-1} as $M_{k+1}=Ad^*_{\varphi_d(w_{k})}Ad^*_{w_k}M_k$, or equivalently as the \textbf{forced discrete Lie--Poisson equations}
\begin{equation}\label{eq:evolution_of_Mk}
	M_{k+1}=Ad^*_{w_k\varphi_d(w_{k})}M_k.
\end{equation}
From this we can see that the discrete flow $M_k\mapsto M_{k+1}$ preserves the coadjoint orbits. 

\begin{remark}
It is possible to understand equation 
\[
{\boldsymbol \delta} w_k=-\eta_k w_k +  w_k (Ad_{\varphi_d(w_{k})}\eta_{k+1}),
\]
as an approximation to the continuous equation: 
\[
	\delta \xi=\dot \eta +[\xi+\varphi(\xi), \eta], 
	\]
In fact, if we define
\[
w_k=\hbox{exp} (h\xi_k),
\]
where $\xi_k\in {\mathfrak g}$ then
\begin{align*}
\delta w_k&= T_{h\xi_k}\hbox{exp}(h\delta \xi_k)\\
&=T_e{\mathcal R}_{w_k}(\hbox{dexp}_{h\xi_k}(h\delta\xi_k))=\hbox{dexp}_{h\xi_k}(h\delta\xi_k)w_k\\
&=\left(\sum_{j=0}^{\infty}
\frac{1}{(j+1)!} \hbox{ad}^j_{h\xi_k}(h\delta \xi_k)\right) w_k\\
&=h\delta\xi_k w_k+O(h^2)\\
&=(\eta_{k+1}-\eta_k)w_k+h\; (\hbox{ad}_{\xi_k+\varphi(\xi_k)}\eta_{k+1})w_k +O(h^2)\\
&=(\eta_{k+1}-\eta_k)w_k
+\left(\hbox{Ad}_{\hbox{\small exp}
(h\xi_k+h\varphi(\xi_k))} \eta_{k+1}-\eta_{k+1}\right)w_k+O(h^2)\\
&=-\eta_k w_k+w_k \hbox{Ad}_{\hbox{\small exp}
(h\varphi(\xi_k))} \eta_{k+1}+O(h^2)\\
&=-\eta_k w_k+w_k \hbox{Ad}_{\varphi_d(w_k)} \eta_{k+1}+O(h^2),
\end{align*}
where we have defined  $\varphi_d(w_k)=\hbox{exp}
(h\varphi(\xi_k))$.
In the previous calculations we have made use of the right trivialized tangent map of the exponential map defined by (see Definition \ref{Retr} for more details):
\[
T_{\xi}\hbox{exp}=T_eR_{\hbox{\small exp}(\xi)}\circ \hbox{dexp}_{\xi},
\]
and that \citep{Iserles_Munthe-Kaas_Nørsett_Zanna_2000}
\[
\hbox{dexp}_{\xi}=\sum_{j=0}^{\infty}\frac{1}{(j+1)!}ad^j_{\xi}.
\]
\end{remark}

	\begin{example}{\bf Discrete forced Rigid Body Equations.}\label{example_forced_RB_MV} (\cite{moser-veselov0138}, see also e.g. \cite{bloch2002symmetric})
		{\rm 
			It is possible to write the Lagrangian \eqref{eq:usual_RB_lagrangian} as
			\begin{equation}\label{eq:arnold}
l(\Omega)=\frac{1}{2}\operatorname{trace}( \Omega J \Omega^T),
			\end{equation}
            where $\Omega$ is the skew-symmetric matrix associated to $\mathbf\Omega$, usually denoted also by $\widehat{\mathbf{\Omega}}=\Omega$. The reconstruction equation is $\dot{R}=R\Omega$ where $R \in SO(3)$.
            The matrix $J$ is related to $\mathbb{I}$ by
            \[ J=\frac{1}{2}\operatorname{trace}(\mathbb{I})I-\mathbb{I}=\frac{1}{2}\operatorname{diag}(-I_1+I_2+I_3,I_1-I_2+I_3,I_1+I_2-I_3).\]
            For $\mathbb{I}$ to be physically valid, the principal moments of inertia must satisfy the triangle inequality, which is equivalent to $J$ being a nonnegative matrix. Nevertheless, our derivations below do not rely on this assumption.
            
            Although we defined $l$ for a skew-symmetric $\Omega$,
            for approximation purposes it is possible to evaluate it on arbitrary matrices. Indeed, a discretization of this Lagrangian is given by the discrete Lagrangian $l_d\colon SO(3)\rightarrow {\mathbb R}$: 
			\[
			l_d(w_k)=hl\left( \frac{w_k-I}{h}\right)=\frac{h}{2}\operatorname{trace}\left(  \left( \frac{w_k-I}{h}\right)J\left(\frac{w_k-I}{h}\right)^T\right),
			\]
			where we have resorted to the approximation
			\[\Omega_k=R_k^{-1}\dot{R}_k\approx R_k^{-1}\left( \frac{R_{k+1}-R_k}{h}\right)=\frac{R_k^{-1} R_{k+1}-I}{h}= \frac{w_k-I}{h},\]
			with $w_k=R_k^{-1}R_{k+1}=R_k^TR_{k+1}$.
			
			A straightforward computation shows that $l_d(w_k)=\left(\operatorname{trace}(J)-\operatorname{trace}(w_k J)\right)/h$.
			Applying the characterization of critical points discussed at the beginning of Section~\ref{sec:Discrete_Lagrangian_formalism_for_DBD}, we have that
			\begin{align*}
				0&=\frac{d}{dt}\Big|_{t=0}\operatorname{trace} (w_k\gamma(t)J)+\operatorname{trace} (\gamma(t)^{-1}w_{k+1}J)\\
				&=\operatorname{trace} (\dot{\gamma}(0)(Jw_k+Jw_{k+1}^T)),
			\end{align*}
			for all $\gamma\colon (-\epsilon,\epsilon)\rightarrow SO(3)$ with $\gamma(0)=I$. Since $\dot \gamma(0)\in\mathfrak{so}(3)$ is arbitrary, this is equivalent to $Jw_k+Jw_{k+1}^T$ being symmetric. Then the discrete Euler--Poincar\'e equations are
			\[
			Jw_k+Jw_{k+1}^T=w_k^TJ+w_{k+1}J.
			\]
			
			Also, it is possible to write the rigid body equations as discrete Lie--Poisson equations (see \cite{moser-veselov0138}):
			\[
			M_{k+1}=Ad_{w_k}^*M_k=w_k^T M_k w_k,
			\]
			where $M_k={\mathcal R}_{w_k}^*dl_d(w_k)=\frac{1}{h}(w_k J-Jw_k^T)$ is a skew-symmetric matrix.
			
	Now, we add in this picture the discrete (dissipative) map $\varphi_d\colon SO(3)\rightarrow SO(3)$ then the discrete Lie--Poisson equations are replaced by their forced version~\eqref{eq:evolution_of_Mk}, that is, $	M_{k+1}=Ad^*_{w_k\varphi_d(w_{k})}M_k$. Let us solve these equations on the interval $[t_0, t_N]$, where $N\in \mathbb{N}$, $h=(t_N-t_0)/N$ is the step size of integration and $t_k = t_0 + kh$. The initial angular momentum is $\mu(t_0)\in\mathfrak{so}(3)^*$, where $\mu(t)=\widehat{\mathbb{I}\mathbf{\Omega}}=\Omega(t)J +J\Omega(t)$ and $\Omega(t)$ is a solution of the double-bracket dissipation equations for  the continuous Lagrangian given in (\ref{eq:arnold}) and dissipation given by $\varphi$. These are related by $\varphi_d(w_k)=\exp(h\varphi(\Omega_k))$, but we can take other retractions, such as the Cayley map, as discussed in the next section.
    Note that since $w_k$ represents an increment in the group variable rather than an angular velocity, the continuous angular momentum $\mu$ at time $t_k=t_0+kh$ is approximated by
    \begin{align*}
    M_k&=\frac{1}{h}(w_k J-Jw_k^T) \\
    &\approx \frac{1}{h}((I+h\Omega_k)J-J(I+h\Omega_k)^T\\
    &=\Omega_kJ+J\Omega_k=\mu(t_k)\,.
    \end{align*}

Then the method for solving the discrete Lie--Poisson equations for the rigid body with double-bracket dissipation, in matrix formulation, is the following:
    \begin{enumerate}
			\item Set $M_0=\mu(t_0).$
			\item For $k=0,\ldots, N-1$, 
            \begin{enumerate}
                \item find $w_k$ such that $M_k=\frac{1}{h}(w_k J-Jw_k^T)$;
            \item set
			$M_{k+1}=Ad^*_{w_k\varphi_d(w_{k})}M_k=(w_k\varphi_d(w_{k}))^T M_k w_k\varphi_d(w_{k})$;
			\item output the approximation $\mu(t_{k+1})\approx M_{k+1}$.
            \end{enumerate}
		\end{enumerate}

        It is worth mentioning that solving for $w_k$ in each step involves selecting a branch in a correspondence (see~\cite{moser-veselov0138}, \S 1.2), which agrees with the fact that discrete equations of motion, such as the discrete Euler--Lagrange equation, can generally have multiple solutions.
        
		}
	\end{example}
	\subsection{Discrete methods with double-bracket dissipation using retraction maps}
For constructing the method starting from the continuous data $l$ and $\varphi$, we will need to have a correspondence between
the Lie groups variables $w_k$ and the averaged velocity of the
	displacement they represent. This is accomplished using a {\bf retraction map} $\tau\colon {\mathfrak g}\to G$ which is an analytic local diffeomorphism mapping a neighborhood of $0\in\mathfrak{g}$ to a neighborhood of the identity $e\in G$, such that $\tau(0)=e$ and $\tau(\xi)\tau(-\xi)=e$, where $\xi\in\mathfrak g$. 
	
	 We define  the discrete Lagrangian $l_d\colon G\to
{\mathbb R}$ to be
	\begin{equation*}
		l_d(w_k)=h\,l(\xi_k),
	\end{equation*}
	where $\xi_k=\tau^{-1}(w_k)/h$. 
	The remaining ingredient is to define the map $\varphi_d\colon G\rightarrow G$ from $\varphi\colon {\mathfrak g}\rightarrow {\mathfrak g}$ as follows:
	\begin{equation}\label{eq:def_of_phid}
\varphi_d(w_k)=\tau(h \varphi(\xi_k)).
	\end{equation}
	This is actually defined for $w_k$ in a neighborhood of $e\in G$.
    
	Now, we can express the discrete equations for double-bracket dissipation in terms of a chosen map $\tau$. The resulting discrete mechanics will thus involve the derivatives of the map which we define next (see also~\cite{Bou-Rabee2009,Iserles_Munthe-Kaas_Nørsett_Zanna_2000}):
	\begin{definition}\label{Retr}
		{\rm Given a map $\tau\colon\mathfrak{g}\to G$, its {\bf right trivialized tangent} $\mbox{d}\tau_{\xi}\colon\mathfrak{g}\to\mathfrak{g}$ and its {\bf inverse} $\mbox{d}\tau_{\xi}^{-1}\colon\mathfrak{g}\to\mathfrak{g}$, are $\xi$-dependent maps, $\xi\in\mathfrak{g}$, such that for  $\eta\in\mathfrak{g}$, the following holds}
		\begin{align*}
			&T_{\xi}\tau (\eta)=\mbox{d}\tau_{\xi} (\eta)\,\tau(\xi),\\
			&T_{\tau(\xi)}\tau^{-1}(\eta\tau(\xi))=\mbox{d}\tau^{-1}_{\xi}(\eta).
		\end{align*}
	\end{definition}
	Using these definitions, from the variations given by 
    \[
{\boldsymbol \delta} w_k=-\eta_k w_k +  w_k (Ad_{\varphi_d(w_{k})}\eta_{k+1}),
\]
 we can rewrite this expression as variations in terms of $\delta \xi_k$  
    as follows: 
    \begin{align*}
        \delta{\xi}_k&=\frac{1}{h}T_{w_k}\tau^{-1}(\delta w_k)\\
        &=\frac{1}{h}d\tau^{-1}_{h\xi_k}(-\eta_k + Ad_{\tau(h\xi_k)\varphi_d(w_{k})}\eta_{k+1}),
    \end{align*}
     where we have used differentiation of $\xi_k=\tau^{-1}(w_{k})/h$ and  expression (\ref{wer}).
	
	Now, with these discretizations of the continuous Lagrangian and the map $\varphi\colon {\mathfrak g}\rightarrow {\mathfrak g}$, the  discrete generalized variational principle derived in Theorem \ref{propo2} is now rewritten as 
	\[
	\delta \sum_{k=0}^{N-1} l(\xi_k)=0,
	\]
	for variations of the form: 
\begin{align*}
\delta \xi_k&=\frac{1}{h}\hbox{d}\tau^{-1}_{h\xi_k}(-\eta_k+\hbox{Ad}_{\tau(h\xi_k)\tau(h \varphi(\xi_k))}
\eta_{k+1}),\\
\\
&=-\frac{1}{h}\hbox{d}\tau^{-1}_{h\xi_k}(\eta_k)+\frac{1}{h}\hbox{d}\tau^{-1}_{-h\xi_k}(\hbox{Ad}_{\tau(h \varphi(\xi_k))}
\eta_{k+1}),\
\end{align*}
for any $\{\eta_k\}_{k=0}^N$ with $\eta_0=\eta_N=0$.
	
We derive the  following discrete equations of motion for systems with double-bracket dissipation: 
\begin{equation}\label{eq-doubledissipation-lagrangian}
-\left(\hbox{d}\tau^{-1}_{h\xi_{k+1}}\right)^* \frac{\partial l}{\partial \xi}(\xi_{k+1})
+Ad_{\tau(h \varphi(\xi_k))}^*
\left(\hbox{d}\tau^{-1}_{-h\xi_{k}}\right)^* \frac{\partial l}{\partial \xi}(\xi_{k})=0,
\end{equation}
where $\left(\hbox{d} \tau^{-1}_{\xi} \right)^* \colon \mathfrak{g}^* \to \mathfrak{g}^*$ for each $\xi\in\mathfrak{g}$. We can denote
	\[
	p_{k}=\frac{\partial l}{\partial \xi}(\xi_{k}),
	\]
	so the equations become 
	\begin{equation}\label{eq:evolution_of_pk}
(\hbox{d}\tau^{-1}_{h\xi_{k+1}}) ^*	p_{k+1}=Ad_{\tau(h \varphi(\xi_k))}^*
(\hbox{d}\tau^{-1}_{-h\xi_{k}}) ^*p_k.
	\end{equation}
By using the relation
\[
\left( \hbox{d} \tau^{-1}_{-h\xi_k} \right)^* = Ad_{\tau(h\xi_k)}^* \left( \hbox{d} \tau^{-1}_{h\xi_k} \right)^*,
\]
we get
\[
\left( \hbox{d} \tau_{h\xi_{k+1}}^{-1} \right)^* p_{k+1} = Ad_{\tau(h \varphi(\xi_k))}^* Ad_{\tau(h \xi_k)}^* \left(\hbox{d} \tau^{-1}_{h\xi_k} \right)^* p_k.
\]

Call $M_k = \left(\hbox{d} \tau_{h \xi_k}^{-1} \right)^* p_k$. This is consistent with our previous definition~\eqref{eq:discrete_momentum_Mk_with_R} of the discrete momentum. Indeed, $l_d(w_k)=hl(\xi_k)=hl(\tau^{-1}(w_k)/h)$, so
\[
\mathcal{R}^*_{w_k}dl_d(w_k)=
\mathcal{R}^*_{w_k}\left((T_{h\xi_k}\tau^{-1})^*dl(\xi_k)\right)=
(T_{h\xi_k}\tau^{-1}\circ \mathcal{R}_{w_k})^* dl(\xi_k)=\left(\hbox{d} \tau_{h \xi_k}^{-1} \right)^* p_k.
\]
Thus, we obtain $
M_{k+1} = Ad^*_{\tau(h\xi_k) \tau(h \varphi(\xi_k)) } M_k$, or equivalently,
\begin{equation}\label{eq:evolution_of_Mk_in_tau_section}
M_{k+1} = Ad^*_{w_k \varphi_d(w_k)} M_k.
\end{equation}
\begin{remark}
Equation~\eqref{eq:evolution_of_Mk_in_tau_section} coincides with~\eqref{eq:evolution_of_Mk}. In fact, the matrix $w_k$ in~\eqref{eq:evolution_of_Mk} is obtained by solving~\eqref{eq:discrete_momentum_Mk_with_R}, which is equivalent to solving $M_k=\left(\hbox{d} \tau_{h \xi_k}^{-1} \right)^* \frac{\partial l}{\partial \xi}(\xi_k)$ for $\xi_k$.
\end{remark}

\begin{remark}\label{rem:Mk_vs_pk}
    Equation~\eqref{eq:evolution_of_Mk_in_tau_section} shows that the method, as an update rule $M_k\mapsto M_{k+1}$, preserves the coadjoint orbits. The same cannot be said of the evolution of $p_k$. These can be interpreted as a discretization of the continuous momentum $\frac{\partial l}{\partial \xi}$ as opposed to the discrete momentum $M_k$, which is defined from $l_d$. If $T_0\tau=d\tau_0$ is the identity map, a condition sometimes included in the definition of a retraction map, then both $M_k$ and $p_k$ approximate the continuous momentum at each time step. However, only $M_k$ is guaranteed to evolve on its coadjoint orbit.
\end{remark}

		Therefore, we obtain the following discrete  method with double-bracket dissipation 
({\bf DDB method}) in terms of a retraction map $\tau$:

\begin{enumerate}
			\item Set $M_0=\mu(t_0).$
			\item For $k=0,\ldots, N-1$, 
            \begin{enumerate}
                \item find $\xi_k$ such that $M_k = \left(\hbox{d} \tau_{h \xi_k}^{-1} \right)^* \displaystyle\frac{\partial l}{\partial \xi}(\xi_k)$;
            \item set $w_k=\tau(h\xi_k)$
            \item set $M_{k+1}=Ad^*_{w_k\varphi_d(w_{k})}M_k$;
			\item output the approximation $\mu(t_{k+1})\approx M_{k+1}$.
            \end{enumerate}
		\end{enumerate}

For step (ii)(c), one can avoid computing the expression of $\varphi_d$ explicitly by using its definition~\eqref{eq:def_of_phid}; that is, one can set $M_{k+1}=Ad^*_{w_k \tau(h\varphi(\xi_{k}))}M_k$.

	Some typical choices for the retraction map $\tau$ are the following.
	\vspace{0.2cm}
	
	a) The exponential map $\e\colon\al\to G$, defined by $\exp(\xi)=\gamma(1)$, with $\gamma\colon\R\to G$ being the integral curve through the identity of the vector field associated with $\xi\in\al$ (hence, with $\dot\gamma(0)=\xi$). The right trivialized derivative and its inverse are defined by
	\begin{align}
		\mbox{d}\e_{x}\,y&=\sum_{j=0}^{\infty}\frac{1}{(j+1)!}\,\ad_{x}^{j}\, y,\label{eq:dexpxy}\\
		\mbox{d}\e_{x}^{-1}\,y&=\sum_{j=0}^{\infty}\frac{B_{j}}{j!}\,\ad_{x}^{j}\, y,\label{eq:dexpinvxy}
	\end{align}
	where $B_{j}$ are the Bernoulli numbers (see \cite{Iserles_Munthe-Kaas_Nørsett_Zanna_2000}). Typically, these expressions are truncated in order to achieve a desired order of accuracy.
	\vspace{0.2cm}
	
	b) The Cayley map $\ca\colon\al\to G$ is defined by $\ca(\xi)=(I-\frac{\xi}{2})^{-1}(I+\frac{\xi}{2})$ and is valid for a general class of quadratic groups (see \cite{Iserles_Munthe-Kaas_Nørsett_Zanna_2000}). It is applicable to groups of interest in many examples such as $SO(3)$, $SE(2)$ and $SE(3)$ (even though the special Euclidean groups are not quadratic).
    Its right trivialized derivative and inverse are
	\begin{align}
		\mbox{d}\ca_{\xi}\,\eta&=\left(I-\frac{\xi}{2}\right)^{-1}\eta\left(I+\frac{\xi}{2}\right)^{-1}\label{eq:dcay}\\
		\mbox{d}\ca_{\xi}^{-1}\eta&=\left(I-\frac{\xi}{2}\right)\eta\left(I+\frac{\xi}{2}\right).\label{eq:dcayinv}
	\end{align}

In particular, for the case of the group $SO(3)$ used in our rigid body example, let us consider the usual hat map $\widehat{\mbox{}}\,\colon \R^3\to \mathfrak{so}(3)$, $x\mapsto \widehat x=\begin{psmallmatrix}0&-x_3&x_2\\
x_3&0&-x_1\\
-x_2&x_1&0
\end{psmallmatrix}$.
Using this isomorphism we can rewrite equations \eqref{eq:dcay} and \eqref{eq:dcayinv} as linear operators on $\R^3$ with matrix expressions
\begin{align*}
		\mbox{d}\ca_{\widehat{x}}&=\frac{4}{4+\|x\|^2}\left(I+\frac{\widehat{x}}{2}\right)\\
		\mbox{d}\ca_{\widehat{x}}^{-1}&=I-\frac{\widehat{x}}{2}+\frac{x\otimes x}{4}.
	\end{align*}
These are essentially the expressions in \cite{CamposMartinTorrente2025preprint}, except for some constant factors arising from having taken a slightly different choice for the Cayley transform.

Since $\mathfrak{so}(3)^*\equiv \mathfrak{so}(3)\equiv \R^3$ with pairing $\langle\widehat \mu,\widehat \xi\rangle=\frac{1}{2}\tr(\widehat\mu\widehat\xi^T)=\mu\cdot \xi
$, then the duals of these operators are obtained by transposing. In particular,
$     \left(\mbox{d}\ca_{\widehat{x}}^{-1}\right)^*=I+\frac{\widehat{x}}{2}+\frac{x\otimes x}{4}$.

These ideas can also be applied to the case of the exponential map in $\mathfrak{so}(3)$. Indeed, $\ad_x y=x\times y =\widehat{x} y$, so~\eqref{eq:dexpxy} and~\eqref{eq:dexpinvxy} become
\begin{align*}
\operatorname{dexp}_{\widehat{x}}&=\sum_{j=0}^{\infty} \frac{\widehat{x}^j}{(j+1)!}\\
\operatorname{dexp}_{\widehat{x}}^{-1}&=\sum_{j=0}^{\infty} \frac{B_j\widehat{x}^j}{j!}.
\end{align*}
In particular, $\left(\operatorname{dexp}_{\widehat{x}}^{-1}\right)^*=\sum_{j=0}^{\infty} {(-1)^j B_j\widehat{x}^j}/{j!}$. It should be noted that $B_j=0$ for odd $j$, except for $B_1$ which is $-1/2$ in the convention we adopt for these formulas.

\begin{remark} We can rewrite \eqref{eq:evolution_of_pk} using \eqref{eq:dcayinv} as
	\[
	p_{k+1}-\frac{h}{2}\operatorname{ad}^*_{\xi_{k+1}}p_{k+1}-\frac{h^2}{4}\xi_{k+1}^Tp_{k+1}\xi^T_{k+1}
	=	\operatorname{Ad}^*_{\operatorname{cay}(h\varphi(\xi_k))}\left({p}_{k}+\frac{h}{2}\operatorname{ad}^*_{\xi_{k}}{p}_{k}-\frac{h^2}{4}\xi_{k}^T{p}_{k}\xi^T_{k}\right).
	\]
	For the case with no dissipation ($\varphi=0$), this recovers equation (4.26) in \cite{Bou-Rabee2009}. Recall, however, that the map $p_k\mapsto p_{k+1}$ does not preserve coadjoint orbits (Remark~\ref{rem:Mk_vs_pk}). 
\end{remark}

\begin{remark}
We can use the retraction maps to derive a second order discretization of the map $\varphi\colon {\mathfrak g}\rightarrow {\mathfrak g}$. For instance, we can take  
\[
\varphi_d(w_k)=\tau\left(h \varphi\left(\frac{\xi_k+\xi_{k+1}}{2}\right)\right),
\]
or 
\[
\varphi_d(w_k)=\tau\left(\frac{h}{2} \varphi(\xi_k)+\frac{h}{2} \varphi(\xi_{k+1})\right)\; .
\]
With this discrete map $\varphi_d$ the  equation (\ref{eq-doubledissipation-lagrangian}) is  unaltered after exchanging $\xi_k\leftrightarrow \xi_{k+1}$ and $h\leftrightarrow -h$.  
Then 
the method is symmetric and, as consequence, its order is two.  
\end{remark}

\section{Asymptotic behavior of the rigid body and numerical simulations}\label{sec5}
Continuing with our rigid body example, we know from Equation~\eqref{eq:diss_rbd} that the body momentum $M(t)$ evolves on the constant-Casimir sphere $\mathcal{S}= \{M \,|\, \|M\|=\|M(0)\|\}$, which is compact and (positively) invariant under the ODE. In addition, we have chosen $\varphi$ using Equation~\eqref{varphi}, which implies that $dE_l/dt = -\alpha \|M\times \Omega\|^2\leq 0$ on trajectories. LaSalle's invariance principle \citep[Thm.~1]{LaSalle1960} states that, under these conditions, all trajectories approach the largest invariant subset $\mathcal{M}$ of
\[\mathcal{E}=\{M\in \mathcal{S}\,|\,dE_l/dt = -\alpha \|M\times \Omega\|^2= 0\}=\{M\in \mathcal{S}\,|\,M\times \Omega=0\}.\]
This means that $M$ must be parallel to $\Omega=\mathbb{I}^{-1}M$. That is,  $\mathcal{E}$ is the set of eigenvectors of $\mathbb{I}$ with norm $\|M(0)\|$. For every $M\in\mathcal{E}$, the ODE~\eqref{eq:diss_rbd} has an equilibrium point there ($\dot M=0$), so $\mathcal{M}=\mathcal{E}$.

Consider the generic case where one of the moments of inertia is strictly larger than the other two. The corresponding principal axis determines two opposite points on $\mathcal{S}$ for which the energy is a minimum. For almost every initial condition on $\mathcal{S}$, $M(t)$ converges to either one of these points, that is, the body will end up rotating about its shortest axis. The remaining principal axes of inertia correspond to unstable equilibria.

If, alternatively, there are two equal moments of inertia that are strictly larger than the remaining one, the corresponding eigenspace has dimension 2 and $\mathcal{M}$ is a great circle. Since $\mathcal{M}$ consists of equilibrium points rather than merely being invariant, there is no motion tangent to it, which implies that each trajectory converges to some point on $\mathcal{M}$. This is the case, for example, for a long cylinder, and the eigenspace is a plane perpendicular to the axis of the cylinder. This situation and the previous one are illustrated in Figure~\ref{fig:spheresBC} below.

Finally, if the three moments of inertia are equal, then $\mathcal{M}=\mathcal{S}$ and the energy remains constant for every initial condition.

\paragraph{Numerical error analysis} Taking into account the discussion above, we compare the results of our discrete double-bracket dissipation method (with both Cayley and exponential retractions) to other general-purpose methods in the following scenarios:
\begin{itemize}
\item Scenario A (two asymptotically stable points): we take $\mathbb{I}=\operatorname{diag}(1,1.3,0.5)$, $\alpha=0.5$. Since the convergence to an isolated equilibrium point mitigates the differences in $M_{\text{final}}$ between methods, we choose a relatively short time interval, $t\in[0,30]$, so that simulations stop before they are too close to the limiting point. We plot with $\Omega_0=(0, 0.05, 1)$, $M_0=\mathbb{I}\Omega_0$, for a range of time step values $h$. 
\item Scenario B: the same as above, but with $t\in[0,250]$, in order to study the behavior near the limiting point.
\item Scenario C (infinite asymptotically stable points along a great circle): we take $\mathbb{I}=\operatorname{diag}(1,1,0.5)$, $\alpha=0.5$, $\Omega_0=(0, 0.05, 1)$, $M_0=\mathbb{I}\Omega_0$, $t\in[0,250]$, for a range of time step values $h$. The methods converge to the great circle but not necessarily to the same point.
\end{itemize}

The resulting trajectories for Scenarios B and C, using DDB with the Cayley retraction, are depicted in Figure~\ref{fig:spheresBC}.

\begin{figure}[h!]
    \centering
    \includegraphics[width=0.48\linewidth]{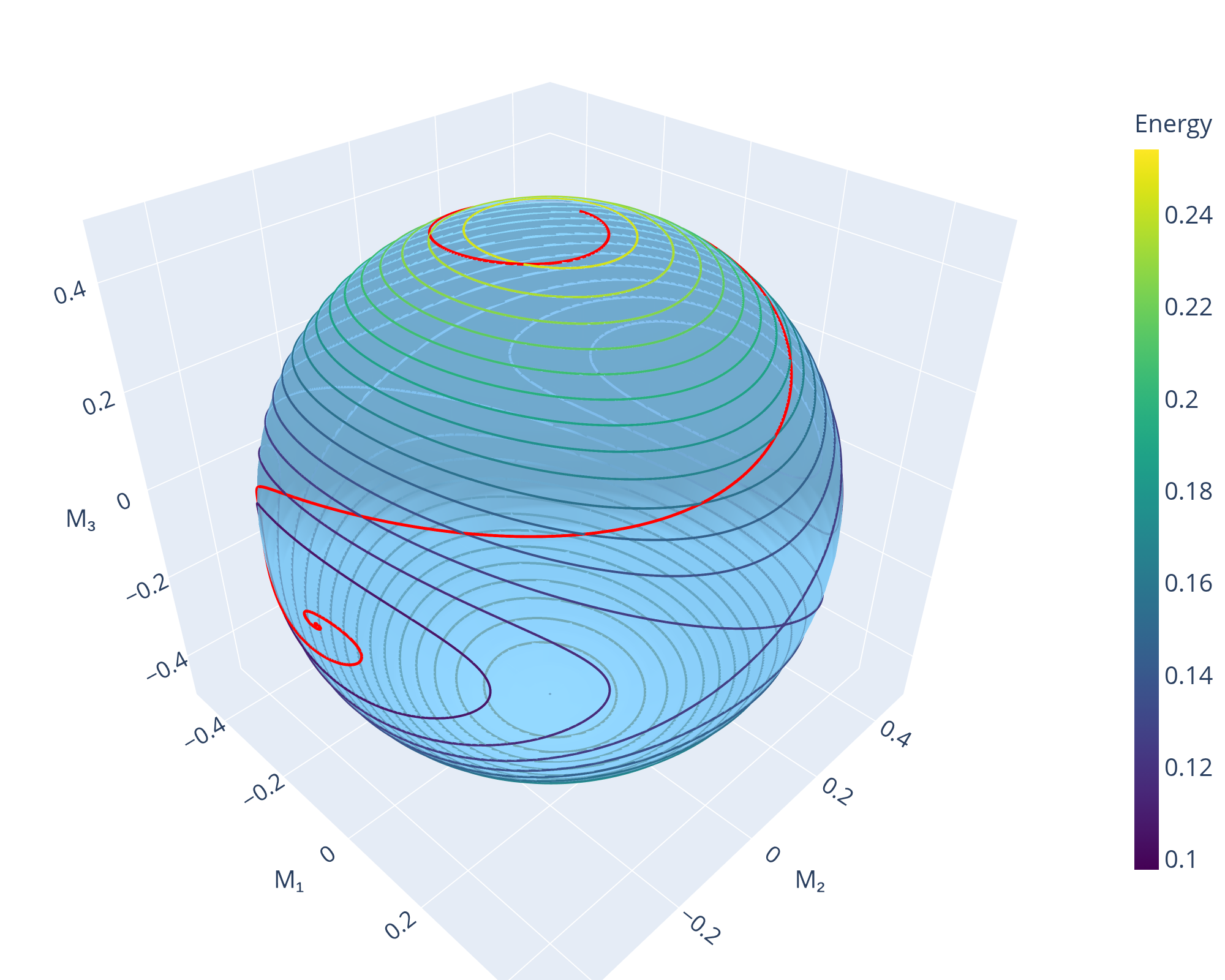}
    \hfill
    \includegraphics[width=0.48\linewidth]{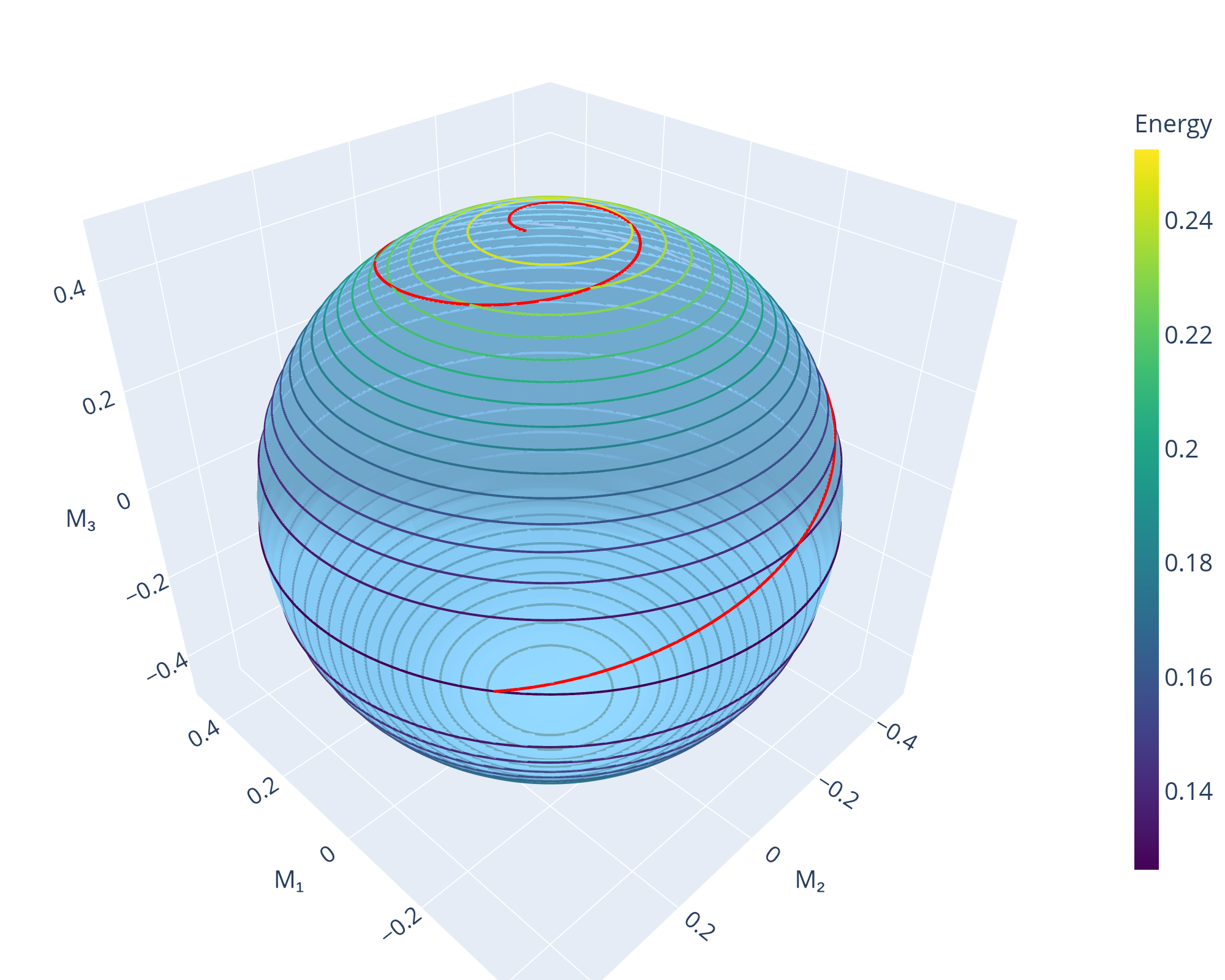}
    \caption{Trajectories for $M_k$, starting near the north pole and approaching the minimum energy sets asymptotically, for Scenarios B ($I_2>I_1>I_3$, left) and C ($I_1=I_2>I_3$, right).}
    \label{fig:spheresBC}
\end{figure}

The methods to be compared are Runge-Kutta 4th order, 3-stage Lobatto IIIC, DDB with $\tau=\operatorname{cay}$, and DDB with $\tau=\exp$. For the latter case we truncate the expansion~\eqref{eq:dexpinvxy} to order 2; higher orders yield essentially the same results.
As a reference solution, we use SciPy's built-in DOP853 method (8th order), with tight tolerances.

Applying each method to Scenario A using $h=0.1$, we compute the evolution of the Casimir $C(M_k)$ and plot it in Figure~\ref{fig:A-casimir_all_methods}. The DDB methods preserve the Casimir exactly, as expected. In Figure~\ref{fig:A-error_in_MN} we plot the error $\|M_N-M_N^{\text{ref}}\|$ between the momentum $M_N$ at the final step $N$ and the one corresponding to the reference solution, for a range of $h$ values between $1$ and $10^{-3}$.

Scenario B explores the asymptotic behavior for the case of two minimum energy points on $\mathcal{S}$. Figure~\ref{fig:B-distance_to_limit_point_all_methods} compares $M_N$ with the limit points, which are exactly $\pm \|M_0\|(0,1,0)$ as discussed above. In addition, Figure~\ref{fig:B-momentum_difference_norms} shows that the momentum of the DDB methods is eventually closer to the reference solution than the other methods (contrast with Figure~\ref{fig:A-error_in_MN}). This is due to the Casimir preservation property of the DDB methods and the asymptotically stable character of the equilibrium point. Both figures use $h=0.1$.

Scenario C shows the asymptotic behavior for the case where $\mathcal{M}$ is a great circle on $\mathcal{S}$, in this case on the plane corresponding to $I_1,I_2$. We compute the distance from $M_N$ to $\mathcal{M}$ and also to the plane (Figure~\ref{fig:C-distance_to_great_circle_all_methods}). Measuring the distance to the plane is more forgiving towards non--Casimir-preserving methods.

\begin{figure}[h!]
    \centering
    \includegraphics[width=0.8\linewidth]{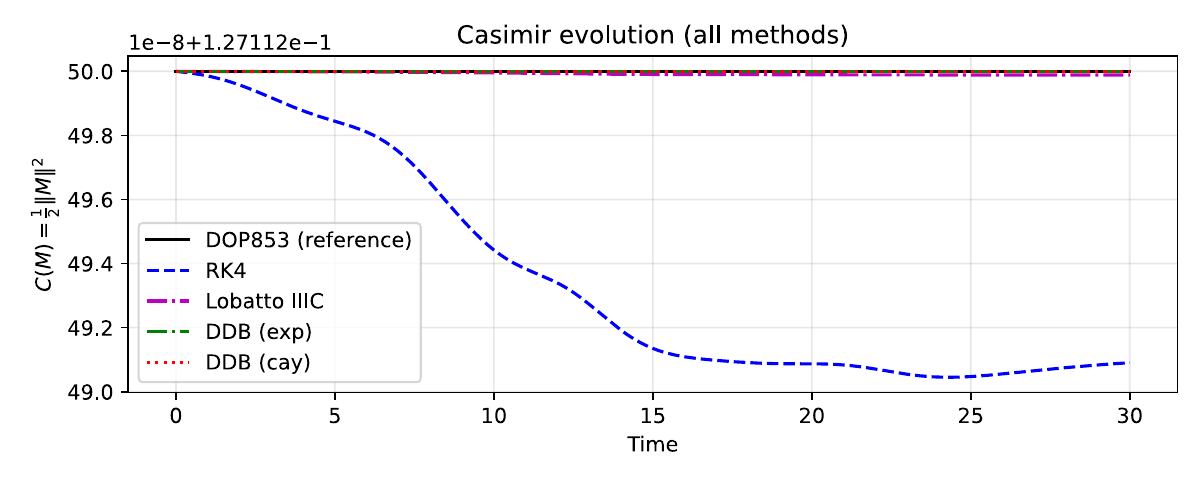}
    \includegraphics[width=0.8\linewidth]{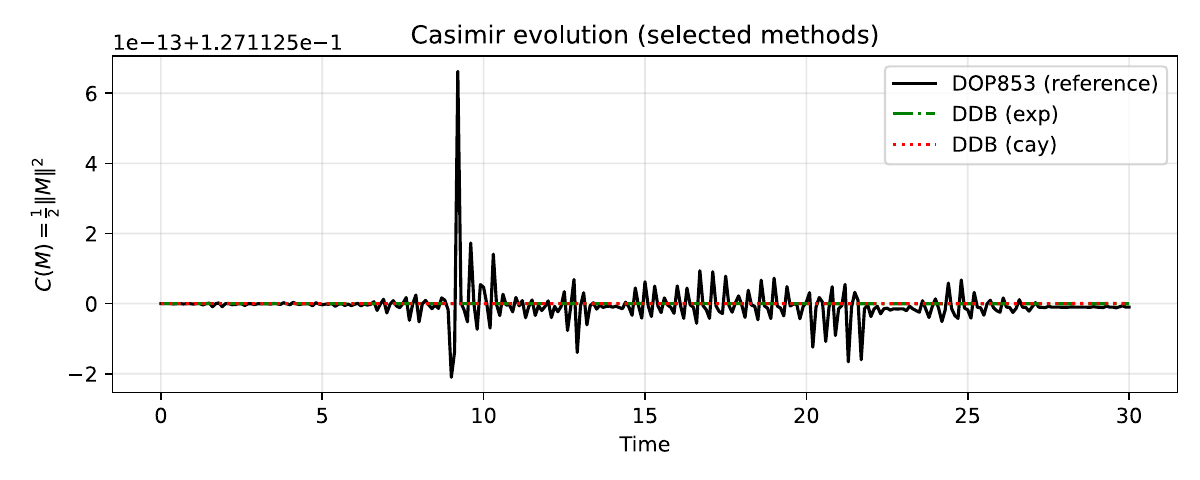}
    \caption{Casimir values for all methods and for DDB and reference methods only (Scenario A).}
    \label{fig:A-casimir_all_methods}
\end{figure}

\begin{figure}[h!]
    \centering
    \includegraphics[width=0.8\linewidth]{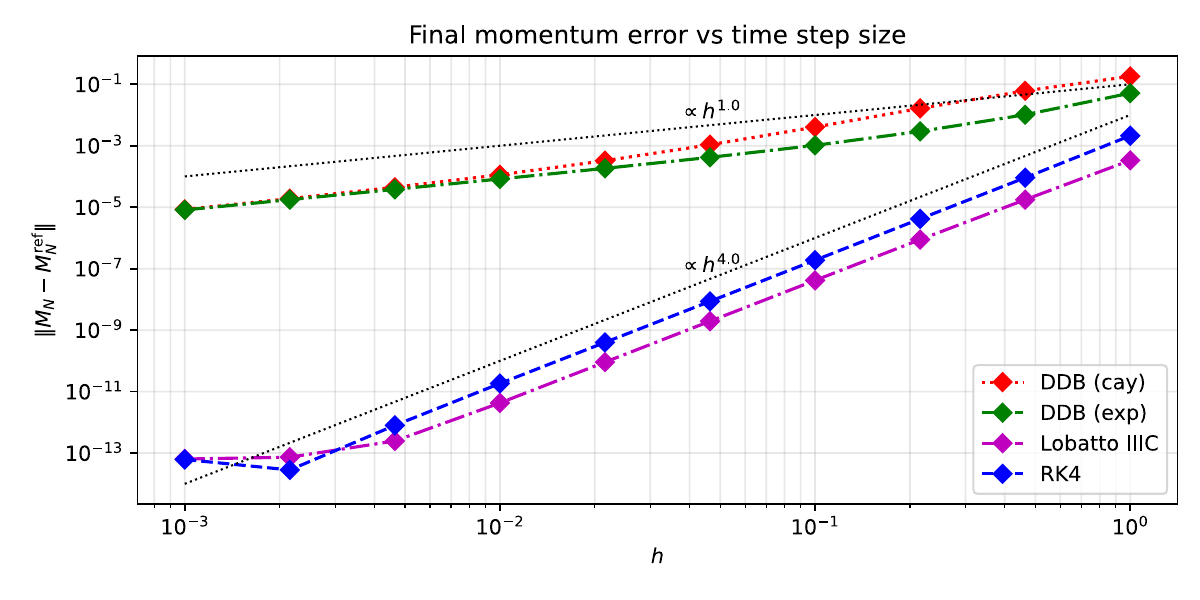}
    \caption{Error in final momentum $M_N$ (Scenario A).}
    \label{fig:A-error_in_MN}
\end{figure}

\begin{figure}[h!]
    \centering
    \includegraphics[width=0.8\linewidth]{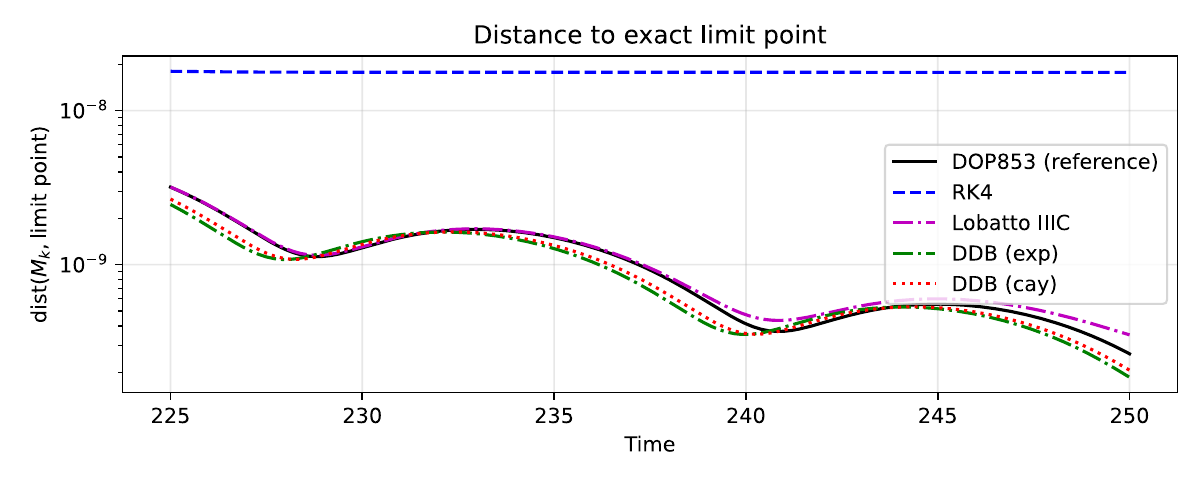}
    \caption{Distance of the discrete trajectory to the exact limit point during the last part of the simulation (Scenario B).}
    \label{fig:B-distance_to_limit_point_all_methods}
\end{figure}

\begin{figure}[h!]
    \centering
    \includegraphics[width=0.8\linewidth]{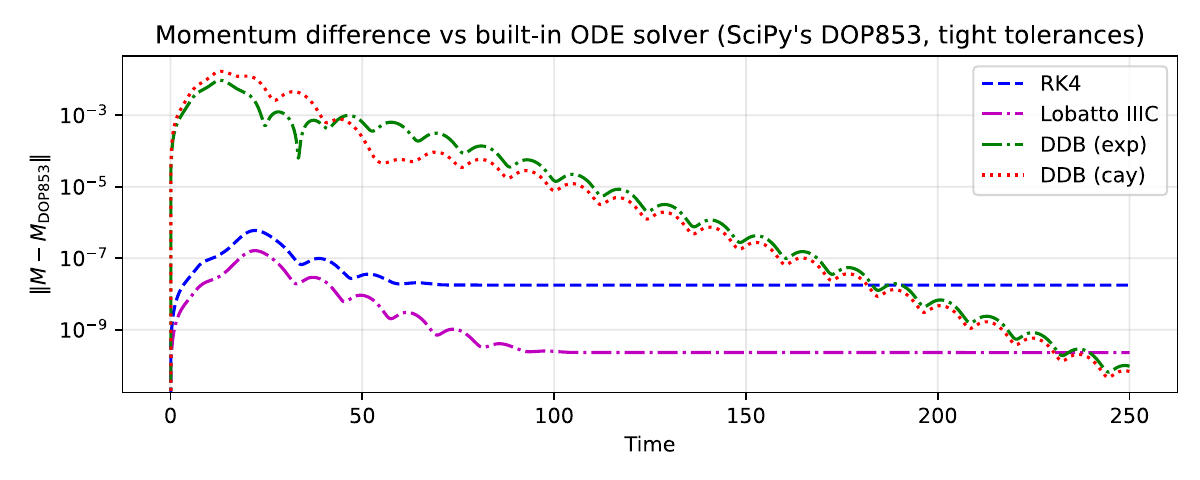}
    \caption{Momentum difference w.r.t. the reference trajectory (Scenario B).}
    \label{fig:B-momentum_difference_norms}
\end{figure}

\begin{figure}
    \centering
    \includegraphics[width=0.8\linewidth]{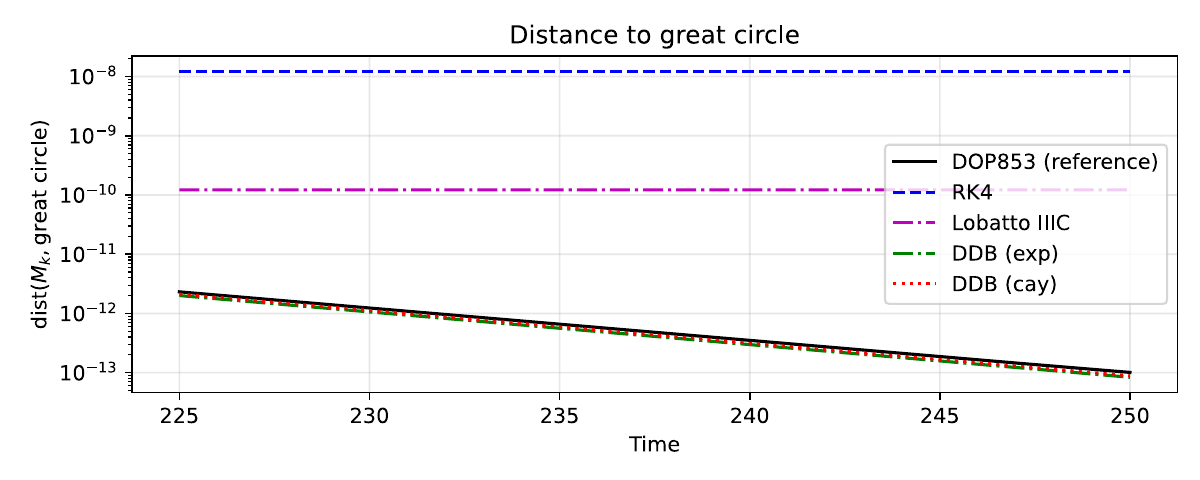}
    \includegraphics[width=0.8\linewidth]{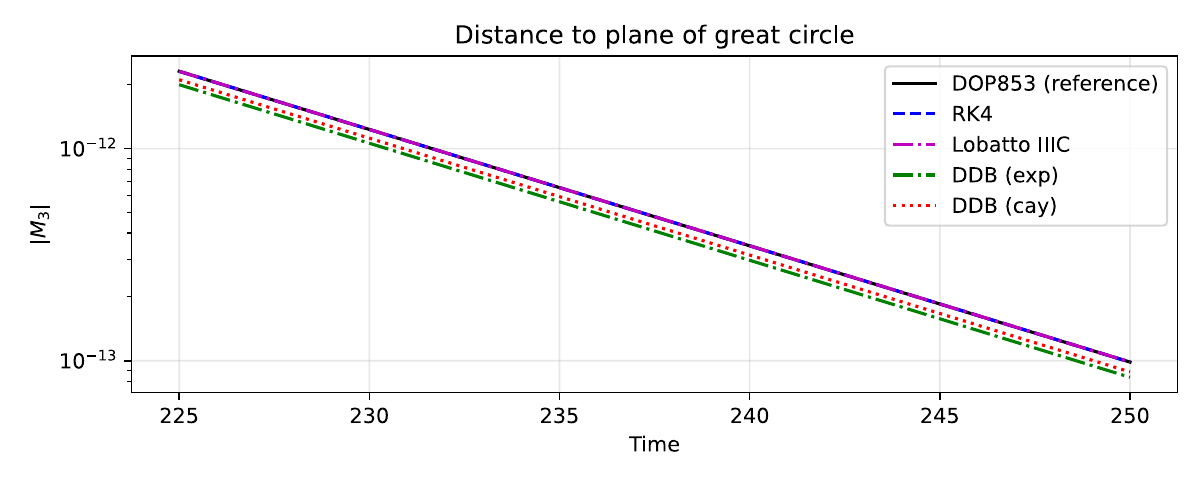}
    \caption{Distance to the great circle $\mathcal{M}$ of minimum energy points (top) and to the plane containing $\mathcal{M}$ (bottom), during the last part of the simulation (Scenario C).}
    \label{fig:C-distance_to_great_circle_all_methods}
\end{figure}

Finally, the execution time for each algorithm, for Scenario A and $h=0.1$, was as follows.

\begin{table}[h!]
    \centering
    \begin{tabular}{l|c|c|c|c|c}
     \textbf{Integrator} & 
       DOP853 (ref) &
  RK4                &
  Lobatto IIIC        &
  DDB (exp)           &
  DDB (cay)           \\ \hline 
     \textbf{Runtime (s)} & 
     0.207 &
   0.162 &
  1.616&
   0.107&
   0.108
    \end{tabular}
    \caption{Execution time for all methods}
    \label{tab:exectime}
\end{table}

An animation is available at \url{https://youtu.be/pxSJdQMxYQM} as supplementary material. It shows essentially the two behaviors illustrated in Figure \ref{fig:spheresBC}, except that the initial condition is moved closer to the north pole and the final time is $150$. 

\clearpage

\section{Conclusions and future work}

In this paper, we have developed a new geometric integrator for double-bracket dissipation, demonstrating excellent performance across a range of numerical simulations, including comparisons with higher-order methods.

The methods developed in this paper can be  extended to the general setting of Lie algebroids, whose duals are equipped with a linear Poisson structure. The goal is to preserve the associated symplectic foliation while allowing for energy dissipation. This framework includes mechanical systems arising from symmetry reduction.

In future work, we will derive numerical methods for the case of ideal fluids, where the setting is considerably richer and the double-bracket construction can be used to compute solutions that preserve the Casimir invariants of Euler’s fluid equations while dissipating energy (see \cite{bloch1996euler}). The main idea is to adapt the approach in \cite{Pavlov} to the methods developed in this paper, incorporating double-bracket dissipation through an approximation of the group of volume-preserving diffeomorphisms by a finite-dimensional Lie group.

\paragraph{Acknowledgments:} A.B. supported in part by NSF grant  DMS-2103026, and AFOSR grants FA
9550-22-1-0215 and FA 9550-23-1-0400. DMdD  acknowledges financial support from the Spanish Ministry of Science and Innovation under grants PID2022-137909NB-C21, PCI2024-155047-2 and  from the Severo Ochoa Programme for Centres of Excellence in R\&D (CEX2023-001347-S). SF acknowledges support from PGI-UNS 24/L124, PICT 2019-00196 Agencia I+D+i, FONCyT, Argentina, and the Severo Ochoa Programme.

% \bibliography{references}

\begin{thebibliography}{22}
\providecommand{\natexlab}[1]{#1}
\providecommand{\url}[1]{\texttt{#1}}
\expandafter\ifx\csname urlstyle\endcsname\relax
  \providecommand{\doi}[1]{doi: #1}\else
  \providecommand{\doi}{doi: \begingroup \urlstyle{rm}\Url}\fi

\bibitem[Abraham and Marsden(1978)]{Abraham1978}
Ralph Abraham and Jerrold~E. Marsden.
\newblock \emph{Foundations of Mechanics}.
\newblock {AMS Chelsea Publishing}, {Redwood City, CA}, 2 edition, 1978.

\bibitem[Blanes and Casas(2016)]{blanes}
Sergio Blanes and Fernando Casas.
\newblock \emph{A concise introduction to geometric numerical integration}.
\newblock Monographs and Research Notes in Mathematics. CRC Press, Boca Raton,
  FL, 2016.
\newblock ISBN 978-1-4822-6342-8.

\bibitem[Bloch et~al.(1996)Bloch, Krishnaprasad, Marsden, and
  Ratiu]{bloch1996euler}
Anthony Bloch, PS~Krishnaprasad, Jerrold~E Marsden, and Tudor~S Ratiu.
\newblock The {E}uler-{P}oincar{\'e} equations and double bracket dissipation.
\newblock \emph{Communications in mathematical physics}, 175\penalty0
  (1):\penalty0 1--42, 1996.

\bibitem[Bloch et~al.(2024)Bloch, Farr\'e{}~Puiggal\'i, and de~Diego]{BFM2025}
Anthony Bloch, Marta Farr\'e{}~Puiggal\'i, and David~Mart\'in de~Diego.
\newblock Metriplectic {E}uler-{P}oincar\'e{} equations: smooth and discrete
  dynamics.
\newblock \emph{Commun. Anal. Mech.}, 16\penalty0 (4):\penalty0 910--927, 2024.
\newblock ISSN 2836-3310.
\newblock \doi{10.3934/cam.2024040}.
\newblock URL \url{https://doi.org/10.3934/cam.2024040}.

\bibitem[Bloch et~al.(2002)Bloch, Crouch, Marsden, and
  Ratiu]{bloch2002symmetric}
Anthony~M Bloch, Peter~E Crouch, Jerrold~E Marsden, and Tudor~S Ratiu.
\newblock The symmetric representation of the rigid body equations and their
  discretization.
\newblock \emph{Nonlinearity}, 15\penalty0 (4):\penalty0 1309--1341, 2002.

\bibitem[Bloch et~al.(2013)Bloch, Morrison, and Ratiu]{Bloch-Morrison-Ratiu}
Anthony~M. Bloch, Philip~J. Morrison, and Tudor~S. Ratiu.
\newblock Gradient flows in the normal and {K}\"{a}hler metrics and triple
  bracket generated metriplectic systems.
\newblock In \emph{Recent trends in dynamical systems}, volume~35 of
  \emph{Springer Proc. Math. Stat.}, pages 371--415. Springer, Basel, 2013.
\newblock ISBN 978-3-0348-0451-6; 978-3-0348-0450-9.
\newblock \doi{10.1007/978-3-0348-0451-6\_15}.
\newblock URL \url{https://doi.org/10.1007/978-3-0348-0451-6\_15}.

\bibitem[Bou-Rabee and Marsden(2009)]{Bou-Rabee2009}
Nawaf Bou-Rabee and Jerrold~E. Marsden.
\newblock Hamilton--pontryagin integrators on {L}ie groups part i: Introduction
  and structure-preserving properties.
\newblock \emph{Foundations of Computational Mathematics}, 9\penalty0
  (2):\penalty0 197--219, 2009.
\newblock ISSN 1615-3383.
\newblock \doi{10.1007/s10208-008-9030-4}.
\newblock URL \url{https://doi.org/10.1007/s10208-008-9030-4}.

\bibitem[Campos et~al.(2025)Campos, Martín~de Diego, and
  Torrente]{CamposMartinTorrente2025preprint}
Cédric~M. Campos, David Martín~de Diego, and José Torrente.
\newblock Momentum-based gradient descent methods for {L}ie groups.
\newblock \emph{arXiv preprint arXiv:2404.09363}, 2025.

\bibitem[Hairer et~al.(2010)Hairer, Lubich, and Wanner]{hairer}
E.~Hairer, C.~Lubich, and G.~Wanner.
\newblock \emph{Geometric numerical integration}, volume~31 of \emph{Springer
  Series in Computational Mathematics}.
\newblock Springer, Heidelberg, 2010.
\newblock ISBN 978-3-642-05157-9.
\newblock Structure-preserving algorithms for ordinary differential equations,
  Reprint of the second (2006) edition.

\bibitem[Holm(2011{\natexlab{a}})]{holm-book1}
Darryl~D. Holm.
\newblock \emph{Geometric mechanics. {P}art {I}}.
\newblock Imperial College Press, London, second edition, 2011{\natexlab{a}}.
\newblock ISBN 978-1-84816-775-9; 1-84816-775-X.
\newblock Dynamics and symmetry.

\bibitem[Holm(2011{\natexlab{b}})]{holm-book2}
Darryl~D. Holm.
\newblock \emph{Geometric mechanics. {P}art {II}. {R}otating, translating and
  rolling}.
\newblock Imperial College Press, London, second edition, 2011{\natexlab{b}}.
\newblock ISBN 978-1-84816-778-0; 1-84816-778-4.
\newblock \doi{10.1142/p802}.
\newblock URL \url{https://doi.org/10.1142/p802}.

\bibitem[Iserles et~al.(2000)Iserles, Munthe-Kaas, Nørsett, and
  Zanna]{Iserles_Munthe-Kaas_Nørsett_Zanna_2000}
Arieh Iserles, Hans~Z. Munthe-Kaas, Syvert~P. Nørsett, and Antonella Zanna.
\newblock Lie-group methods.
\newblock \emph{Acta Numerica}, 9:\penalty0 215–365, 2000.
\newblock \doi{10.1017/S0962492900002154}.

\bibitem[LaSalle(1960)]{LaSalle1960}
J.~P. LaSalle.
\newblock Some extensions of {L}iapunov's second method.
\newblock \emph{IRE Trans.}, CT-7:\penalty0 520--527, 1960.

\bibitem[Marrero et~al.(2006)Marrero, Mart\'in~de Diego, and Mart\'inez]{MMM}
Juan~C. Marrero, David Mart\'in~de Diego, and Eduardo Mart\'inez.
\newblock Discrete {L}agrangian and {H}amiltonian mechanics on {L}ie groupoids.
\newblock \emph{Nonlinearity}, 19\penalty0 (6):\penalty0 1313--1348, 2006.
\newblock ISSN 0951-7715,1361-6544.
\newblock \doi{10.1088/0951-7715/19/6/006}.
\newblock URL \url{https://doi.org/10.1088/0951-7715/19/6/006}.

\bibitem[Marsden and West(2001)]{marsden-west}
J.~E. Marsden and M.~West.
\newblock Discrete mechanics and variational integrators.
\newblock \emph{Acta Numer.}, 10:\penalty0 357--514, 2001.
\newblock ISSN 0962-4929.
\newblock \doi{10.1017/S096249290100006X}.
\newblock URL \url{http://dx.doi.org/10.1017/S096249290100006X}.

\bibitem[Marsden and Ratiu(1986)]{marsden-ratiu}
Jerrold~E. Marsden and Tudor Ratiu.
\newblock Reduction of {P}oisson manifolds.
\newblock \emph{Lett. Math. Phys.}, 11\penalty0 (2):\penalty0 161--169, 1986.
\newblock ISSN 0377-9017.
\newblock \doi{10.1007/BF00398428}.
\newblock URL \url{https://doi.org/10.1007/BF00398428}.

\bibitem[Marsden and Ratiu(2013)]{marsden2013introduction}
Jerrold~E Marsden and Tudor~S Ratiu.
\newblock \emph{Introduction to mechanics and symmetry: a basic exposition of
  classical mechanical systems}, volume~17.
\newblock Springer Science \& Business Media, 2013.

\bibitem[Marsden et~al.(1999)Marsden, Pekarsky, and Shkoller]{MaPeSh99}
Jerrold~E Marsden, Sergey Pekarsky, and Steve Shkoller.
\newblock Discrete {E}uler-{P}oincar\'e{} and {L}ie-{P}oisson equations.
\newblock \emph{Nonlinearity}, 12\penalty0 (6):\penalty0 1647--1662, 1999.
\newblock ISSN 0951-7715,1361-6544.
\newblock \doi{10.1088/0951-7715/12/6/314}.
\newblock URL \url{https://doi.org/10.1088/0951-7715/12/6/314}.

\bibitem[Morrison(1986)]{morrison}
Philip~J. Morrison.
\newblock A paradigm for joined {H}amiltonian and dissipative systems.
\newblock \emph{Phys. D}, 18\penalty0 (1-3):\penalty0 410--419, 1986.
\newblock ISSN 0167-2789.
\newblock \doi{10.1016/0167-2789(86)90209-5}.
\newblock URL \url{https://doi.org/10.1016/0167-2789(86)90209-5}.
\newblock Solitons and coherent structures (Santa Barbara, Calif., 1985).

\bibitem[Moser and Veselov(1991)]{moser-veselov0138}
J\"urgen Moser and Alexander~P. Veselov.
\newblock Discrete versions of some classical integrable systems and
  factorization of matrix polynomials.
\newblock \emph{Comm. Math. Phys.}, 139\penalty0 (2):\penalty0 217--243, 1991.
\newblock ISSN 0010-3616,1432-0916.
\newblock URL \url{http://projecteuclid.org/euclid.cmp/1104203302}.

\bibitem[Pavlov et~al.(2011)Pavlov, Mullen, Tong, Kanso, Marsden, and
  Desbrun]{Pavlov}
D.~Pavlov, P.~Mullen, Y.~Tong, E.~Kanso, J.~E. Marsden, and M.~Desbrun.
\newblock Structure-preserving discretization of incompressible fluids.
\newblock \emph{Phys. D}, 240\penalty0 (6):\penalty0 443--458, 2011.
\newblock ISSN 0167-2789,1872-8022.
\newblock \doi{10.1016/j.physd.2010.10.012}.
\newblock URL
  \url{https://doi-org.csic.idm.oclc.org/10.1016/j.physd.2010.10.012}.

\bibitem[Sanz-Serna and Calvo(1994)]{serna}
J.~M. Sanz-Serna and M.~P. Calvo.
\newblock \emph{Numerical {H}amiltonian problems}, volume~7 of \emph{Applied
  Mathematics and Mathematical Computation}.
\newblock Chapman \& Hall, London, 1994.
\newblock ISBN 0-412-54290-0.

\end{thebibliography}

\end{document}